\documentclass[onefignum,onetabnum]{siamart220329}

\usepackage{geometry}
\usepackage{siunitx}
\usepackage{amsmath,amssymb}
\usepackage{amsfonts}
\usepackage{hyperref}
\usepackage{tikz}
\usetikzlibrary{shapes.geometric}
\usepackage{colortbl}
\usepackage{xcolor}
\usepackage{bm}
\usepackage{soul}
\usepackage{siunitx}
\sisetup{retain-zero-exponent}

\usepackage{lipsum}
\usepackage{amsfonts}
\usepackage{graphicx}
\usepackage{epstopdf}
\usepackage{algorithmic}
\ifpdf
\DeclareGraphicsExtensions{.eps,.pdf,.png,.jpg}
\else
\DeclareGraphicsExtensions{.eps}
\fi

\newsiamremark{remark}{Remark}
\newsiamremark{hypothesis}{Hypothesis}

\usepackage[utf8]{inputenc}
\usepackage[english]{babel}
\usepackage{algorithmic}
\newsiamthm{defn}{Definition}
\newsiamthm{ass}{Assumption}
\newsiamremark{rem}{Remark}

\usepackage{algorithm}
\numberwithin{algorithm}{section}
\numberwithin{figure}{section}
\numberwithin{table}{section}

\newcommand{\rd}{\mathrm{d}}
\newcommand{\re}{\mathrm{e}}
\newcommand{\ri}{\mathrm{i}}

\newcommand{\bfx}{\mathbf{x}}

\newcommand{\N}{\mathbb{N}}
\newcommand{\C}{\mathbb{C}}
\newcommand{\R}{\mathbb{R}}
\newcommand{\T}{\mathbb{T}}

\newcommand{\Z}{\mathbb{Z}}
\newcommand{\bbS}{\mathbb{S}}
\newcommand{\cD}{\mathcal{D}}
\newcommand{\cR}{\mathcal{R}}

\newcommand{\bfb}{\mathbf{b}}
\newcommand{\bfd}{\mathbf{d}}
\newcommand{\bfv}{\mathbf{v}}

\newcommand{\bfP}{\mathbf{P}}
\newcommand{\bfepsilon}{\mathbf{\epsilon}}
\newcommand{\dist}{\mathrm{dist}}

\newcommand{\Dnaive}{\mathfrak{D}}
\DeclareMathOperator*{\argmin}{arg\,min}
\DeclareMathOperator*{\argmax}{arg\,max}

\newcommand{\cond}{\mathrm{cond}}
\newcommand{\errin}{\mathcal{E}_{\mathrm{in}}}
\newcommand{\errout}{\mathcal{E}_{\mathrm{out}}}
\newcommand{\calI}{\mathcal{I}}

\newcommand{\revise}[1]{{#1}}
\newcommand{\pone}{\revise{(C1)}}
\newcommand{\ptwo}{\revise{(C2)}}

\headers{Efficient far-field computation for polygons}{A. Gibbs and S. Langdon}

\title{{An efficient frequency-independent numerical method for computing the far-field pattern induced by polygonal obstacles}}

\author{A. Gibbs\thanks{University College London 
		(\email{andrew.gibbs@ucl.ac.uk}).}
	\and S. Langdon\thanks{Brunel University London
		(\email{stephen.langdon@brunel.ac.uk}).}}

\usepackage{graphicx}

\begin{document}
	\maketitle
	\begin{abstract}
		For problems of time-harmonic scattering by rational polygonal obstacles, embedding formulae express the far-field pattern induced by any incident plane wave in terms of the far-field patterns for a relatively small (frequency-independent) set of canonical incident angles. Although these remarkable formulae are exact in theory, here we demonstrate that: (i) they are highly sensitive to numerical errors in practice, \revise{and} (ii) direct calculation of the coefficients in these formulae may be impossible for particular sets of canonical incident angles, even in exact arithmetic. Only by overcoming these practical issues can embedding formulae provide a highly efficient approach to computing the far-field pattern induced by a large number of incident angles.
		
		Here we \revise{address challenges (i) and (ii), supporting our theory with numerical experiments}. \revise{Challenge} (i) is solved using techniques from computational complex analysis: we reformulate the embedding formula as a complex contour integral and prove that this is much less sensitive to numerical errors. In practice, this contour integral can be efficiently evaluated by residue calculus. \revise{Challenge} (ii) is addressed using techniques from numerical linear algebra: we oversample, considering more canonical incident angles than are necessary{, thus expanding the set of valid \revise{coefficient vectors}. The \revise{coefficient vector} can then be selected using either a least squares approach or column subset selection. }
	\end{abstract}

	\begin{keywords}
		{Embedding formula, Far-field pattern, Scattering, Cauchy integral, Oversampling}
	\end{keywords}

	\begin{MSCcodes}
		{35J05, 78A45, 30E20, 65F20}
	\end{MSCcodes}

	\section{Introduction}
	\label{sec:intro}
	
	In problems of {two-dimensional} time-harmonic scattering of {acoustic, electromagnetic or elastic waves,} obtaining a full characterisation of the scattering properties of an obstacle may require a representation of the far-field behaviour induced by a large set, possibly thousands, of incident plane waves~\cite{GaHaHi:12}. In this paper we propose a new method for calculating this representation efficiently across all frequencies for a broad class of polygonal scatterers.
	
	First, we state the scattering problem, which must be solved in order to compute the far-field pattern. We denote each incident plane wave by $u^i(\bfx;\alpha):=\re^{-\ri k(x_1\cos\alpha+x_2\sin\alpha)}$,
	where $\bfx:=(x_1,x_2)\in\R^2$, the wavenumber $k>0$, and the incident angle $\alpha\in[0,2\pi)$. We consider the scattered wave field $u^s(\cdot;\alpha)$ induced by $u^i(\cdot;\alpha)$ and a sound-soft \emph{rational} polygon $\Omega\subset\R^2$, with boundary $\partial\Omega$. Rational polygons have exterior angles that are rational multiples of $\pi$ (see also Definition~\ref{def:RationalPolygons}). The scattered field $u^s(\cdot;\alpha)$ satisfies the Dirichlet Helmholtz problem
	\begin{align}
		(\Delta+k^2)u^s &= 0,\quad\text{in }\R^2\setminus\Omega;\label{Helm}\\
		u^s &= - u^i,\quad\text{on }\partial\Omega;\label{HelmBC}\\
		\frac{\partial u^s(\bfx;\alpha)}{\partial r} - \ri k u^s(\bfx;\alpha) &= o(r^{-1/2}),\quad r:=|\bfx|\to\infty.\label{HelmSRC}
	\end{align}
	
	The \emph{far-field pattern} (also called the \emph{far-field diffraction coefficient} (e.g., \cite{CoKr:13}), \emph{far-field directivity} (e.g., \cite{KrSh:05}), or simply \emph{far-field coefficient} (e.g., \cite{Bi:06}))
	is of practical interest and is central to this paper.
	Intuitively, it describes the distribution of energy of the scattered field, far away from the scatterer.
	We denote the far-field pattern at observation angle $\theta$ (where $\bfx = r(\cos\theta,\sin\theta)$) by $D(\theta,\alpha)$, defined by the asymptotic relationship (see, e.g., \cite[Theorem~2.6]{CoKr:13})
	\begin{equation}\label{FFlabelTest}
		u^s(\bfx;\alpha)= \frac{\re^{\ri(kr +\pi/4)}}{\sqrt{2\pi k r}}\bigg(D(\theta,\alpha)+\mathcal{O}(r^{-1})\bigg),\quad r\rightarrow\infty.
	\end{equation}
	
	We will consider how $D(\theta,\alpha)$ depends on both the observation angle $\theta$ and the incident angle $\alpha$, each considered as elements of the $2\pi$-periodic \revise{set $\bbS:=[0,2\pi)$}, as shown in Figure~\ref{fig:fulld}; we will often write $(\theta,\alpha)\in\T,$ where $\T:=\bbS^2$.
	
	\begin{figure}%
		\centering
		\includegraphics[width=0.6\linewidth]{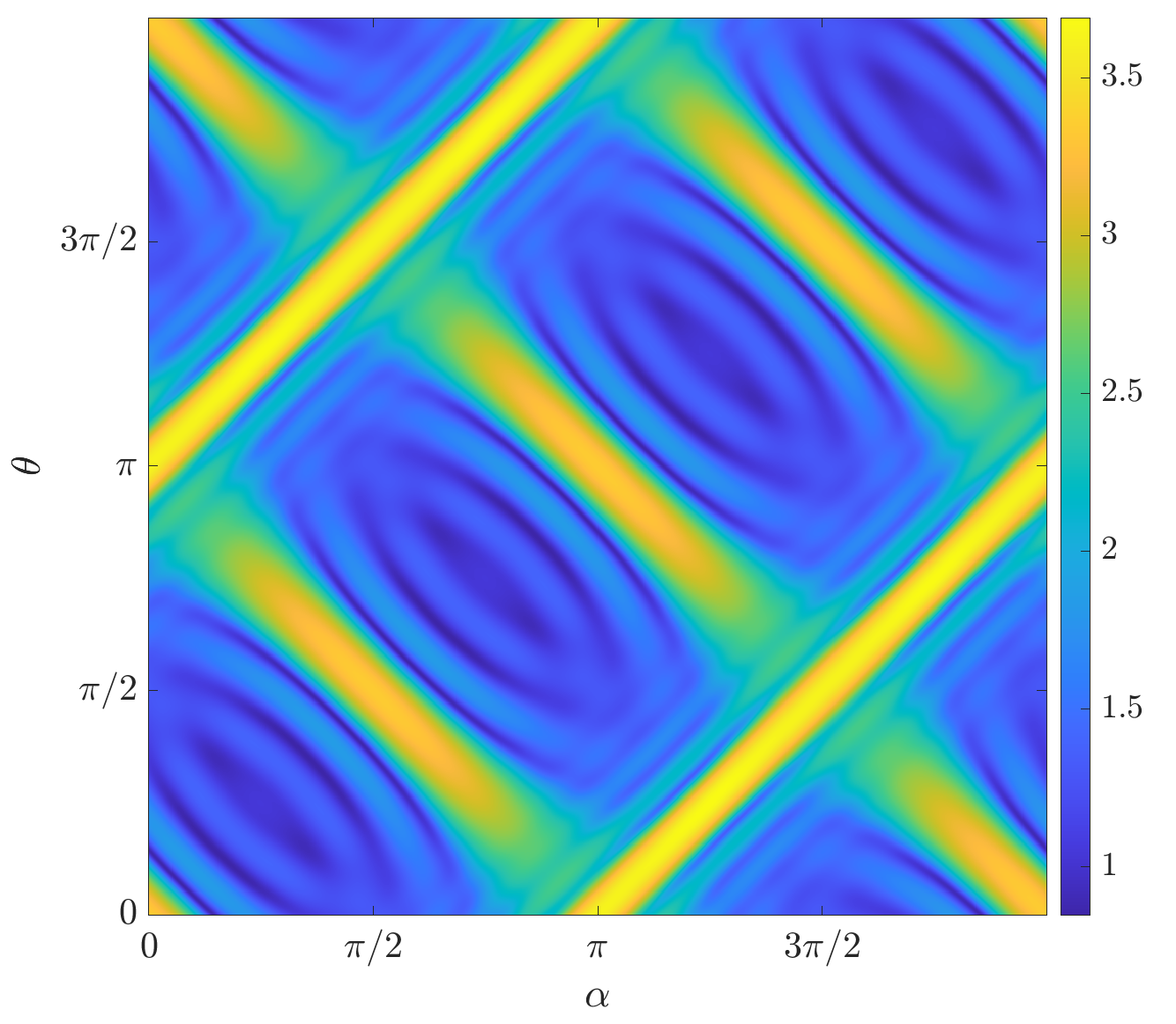}
		\caption{Full far-field characterisation {$\log|D(\theta,\alpha)|$} for $(\theta,\alpha)\in\T$, where $\Omega$ is a \revise{square of diameter 2} and $k=10$. This plot was computed using the approach described in this paper; see \S\ref{sec:numerics} for details.}
		\label{fig:fulld}
	\end{figure}
	{Numerous applications require an understanding of how $D(\theta,\alpha)$ varies over the full range $(\theta,\alpha)\in\T$. For example, in atmospheric physics, when modelling scattering of the sun's radiation by ice crystals in cirrus clouds, to simplify calculations it is common to consider the \emph{orientation average}, where the far-field behaviour is averaged over $\alpha\in\bbS$. Details of the averaging technique and applications can be found in \cite{MiHoTr:00}. %
		Another derived quantity of interest is the \emph{monostatic cross section} $4\pi|D(\alpha,\alpha)|^2$ (sometimes referred to as \emph{backscatter}), where the observer and the source are in the same direction. This \revise{appears} frequently in underwater acoustics and sonar modelling, where it is common for both the signal transmitter and receiver to be positioned on the underside of a boat. A review of relevant applications is given in \cite{GaWe:90}.}
	\revise{This paper considers $\Omega\subset\R^2$, which can provide an approximation for three-dimensional scattering problems on cylindrical obstacles \cite[\S3.4]{CoKr:13}. The potential extension to general three-dimensional obstacles is discussed in \S\ref{sec:future}.}
	
	There are many numerical algorithms for solving \eqref{Helm}--\eqref{HelmSRC} for fixed $\alpha$, \revise{thereby producing} an approximation $D_N(\cdot,\alpha)$ to $D(\cdot,\alpha)$. However, obtaining an approximation for a different incident angle $\alpha'\neq\alpha$ requires the prescription of new boundary data, and hence repetition of some or all of the numerical algorithm; this typically requires a much larger computational cost than varying $\theta$.  This paper is about efficient numerical approximation of $D(\theta,\alpha)$ \revise{over the whole range} $(\theta,\alpha)\in\T$.
	
	A popular approach for approximating $D(\theta,\alpha)$ for $(\theta,\alpha)\in\T$ is the \emph{T-matrix} method (see, e.g., \cite{Ma:06}). Once the $T$-matrix has been constructed, computing the far-field pattern for any given $\alpha\in\bbS$ requires multiplication by a single vector whose entries can be computed with an analytic formula. A drawback though of $T$-matrix methods is that numerically stable construction of the $O(k)$-dimensional $T$-matrix requires, as an input, the numerical solution of $O(k)$ scattering problems \cite{GaHa:09,GaHaHi:12}. Hence, when $k$ is large, $T$-matrix methods may be computationally prohibitive.
	
	\revise{As with} the stable $T$-matrix approach considered in~\cite{GaHa:09}, an input to our method is the numerical solution of {a number of} scattering problems. A key advantage of our method, when compared against $T$-matrix approaches, is that {this number} depends only on the geometry of $\Omega$, and, crucially, is independent of the wavenumber $k$ 
	\revise{in the following sense: if the canonical scattering problems are solved by a numerical method with error $\errin$, then our method will efficiently determine the far-field pattern induced by any incident angle with an error $\errout$, where $\errout/\errin$ is bounded independently of $k$. In particular, this means that if the PDE \eqref{Helm}-\eqref{HelmSRC} is solved by a numerical method for which any prescribed level of accuracy can be achieved with a  number of  degrees of freedom and computational cost that is independent of $k$, as is the case for HNABEM as described in §\ref{sec:hnabem}, then our algorithm will share this key property of the underlying scheme, i.e. that the number of degrees of freedom and computational cost will be independent of $k$.}

	\subsection{The embedding formula for rational polygons}\label{sec:embintro}
	
	Our method is based on an adaptation of the \emph{embedding formula} derived in \cite{Bi:06} (related ideas were first presented in \cite{KrSh:05}). First, we clarify our geometrical constraints.
	
	\begin{defn}[Rational polygons]\label{def:RationalPolygons}
		An angle $\omega\in\bbS$ is \emph{rational} if it can be expressed as $\pi$ multiplied by a rational number. \revise{An $S$-sided polygon $\Omega$ is} rational if its external angles
		$\{\omega_j\}_{j=1}^{S}$ are all rational angles.
		For a rational polygon $\Omega$, we denote by $p$ the smallest {positive} integer such that
		${\pi}/{p}${ divides }$\omega_j${ exactly for }$j=1,\ldots,S,$ %
		whilst $\{q_j\}_{j=1}^{S}$ denotes the set of integers such that $
		{q_j\pi}/{p}=\omega_j$,{ for }$j=1,\ldots,S.$
	\end{defn}
	
	Some examples of rational polygons include: a square with $q_1=\ldots=q_4=3$ and $p=2$; a right-angled isosceles triangle with $q_1=q_2=7$, $q_3=6$ and $p=4$; a screen $\Omega:=[0,1]\times\{0\}$ with $q_1=q_2=2$ and $p=1$. \revise{We note that Definition \ref{def:RationalPolygons} does not require convexity, and this is the case for all of our theoretical results. For simplicity, and due to availability of high-frequency solvers, all of our examples are on convex polygons or screens.}
	
	As in~\cite{Bi:06}, in the remainder of the paper, the formulae have been simplified by assuming that one edge of $\Omega$ is aligned with the horizontal axis. We now state the critical result of~\cite{Bi:06}.
	
	\begin{theorem}\label{th:embedding}
		Suppose that $\Omega$ is a rational polygon and that there exist distinct `canonical incident angles' $\alpha_1,\ldots,\alpha_M$, satisfying Assumption \ref{ass:bexists} {(given below)}, where $M:=\sum_{j=1}^{S}(q_j-1)$ and $q_j$ is as in Definition~\ref{def:RationalPolygons}. Then there exist `embedding coefficients' $b_m(\alpha)$ such that
		\begin{equation}\label{eq:biggs}
			D(\theta,\alpha)=\frac{{\sum_{m=1}^M b_m(\alpha)\Lambda(\theta,\alpha_m){D}(\theta,\alpha_m)}}{\Lambda(\theta,\alpha)},\quad (\theta,\alpha)\in\T,
		\end{equation}
		where
		\begin{equation}
			\Lambda(\theta,\alpha):=\cos(p\theta) - (-1)^p\cos(p\alpha),
			\label{eq:Lambda}
		\end{equation}
		and $p$ is as in Definition \ref{def:RationalPolygons}. If $\Lambda(\theta,\alpha)=0$
		then one or two applications of {L'H\^opital's} rule may be used to express the right-hand side of~(\ref{eq:biggs}) in terms of derivatives with respect to~$\theta$.
	\end{theorem}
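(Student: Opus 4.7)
The plan is to build an auxiliary wavefield $v(\bfx;\alpha)$ that solves the exterior Dirichlet Helmholtz problem~\eqref{Helm}--\eqref{HelmSRC} with zero data, and then extract the embedding formula~\eqref{eq:biggs} by equating far-field asymptotics of the identity $v\equiv 0$.

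The first ingredient is a constant-coefficient linear differential operator of order $p$ whose Fourier symbol on $|\xi|=k$ equals $\Lambda(\theta,\alpha)$. Exploiting the assumed alignment of one edge of $\Omega$ with the horizontal axis, I would define
\begin{equation*}
\mathcal{L}_\alpha:=T_p\bigl(-\ri\partial_{x_1}/k\bigr)-(-1)^p\cos(p\alpha),
\end{equation*}
where $T_p$ is the degree-$p$ Chebyshev polynomial of the first kind. Since $T_p(-x)=(-1)^pT_p(x)$, one checks directly that $\mathcal{L}_\alpha u^i(\cdot;\alpha)\equiv 0$. Moreover, on $|\xi|=k$ one has $T_p(\xi_1/k)=T_p(\cos\theta)=\cos(p\theta)$, so the action of $\mathcal{L}_\alpha$ on a radiating Helmholtz solution of the form~\eqref{FFlabelTest} multiplies the leading far-field coefficient by exactly $\Lambda(\theta,\alpha)$.

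Next, I would form
\begin{equation*}
v(\bfx;\alpha):=\mathcal{L}_\alpha u^s(\bfx;\alpha)-\sum_{m=1}^M b_m(\alpha)\,\mathcal{L}_{\alpha_m}u^s(\bfx;\alpha_m).
\end{equation*}
Each term satisfies the Helmholtz equation and the radiation condition, and each has boundary trace $-\mathcal{L}_{\alpha_m}u^i(\cdot;\alpha_m)\equiv 0$ on $\partial\Omega$, so $v$ has zero Dirichlet data. The non-trivial point is corner regularity: the Meixner/Karp--Williams expansion of $u^s(\cdot;\alpha)$ at a vertex with exterior angle $\omega_j=q_j\pi/p$ contains precisely $q_j-1$ fractional-power radial modes that $\mathcal{L}_\alpha$ would otherwise render singular in the energy norm. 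Summing over vertices gives exactly $M=\sum_{j=1}^S(q_j-1)$ linear constraints on the vector $(b_1(\alpha),\dots,b_M(\alpha))$, and Assumption~\ref{ass:bexists} is the statement that this square linear system is solvable. With those $b_m(\alpha)$ in hand, $v$ is a bona fide classical solution, and Helmholtz uniqueness forces $v\equiv 0$.

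Finally, inserting~\eqref{FFlabelTest} into $v\equiv 0$ and using the multiplier action of $\mathcal{L}$ on the leading term yields $\Lambda(\theta,\alpha)D(\theta,\alpha)=\sum_{m=1}^M b_m(\alpha)\Lambda(\theta,\alpha_m)D(\theta,\alpha_m)$, which is~\eqref{eq:biggs} after dividing by $\Lambda(\theta,\alpha)$; the L'H\^opital clause at zeros of $\Lambda(\theta,\alpha)$ follows by a routine limiting argument once the zero order is identified. The main obstacle I anticipate is the simultaneous cancellation step: although the count of unknowns matches the count of corner constraints, showing that the resulting $M\times M$ linear system is non-degenerate is genuinely non-trivial, which is exactly the content of Assumption~\ref{ass:bexists} and presumably what motivates the oversampling remedy alluded to in the abstract for the cases where this system is ill-conditioned or singular.
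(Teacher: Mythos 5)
The paper does not prove Theorem~\ref{th:embedding}; it is quoted directly from reference~[Bi:06] (Biggs, 2006) and used as a black box, so there is no in-paper proof against which to compare your argument. Your reconstruction does follow the Biggs-style strategy correctly in outline: build a constant-coefficient operator $\mathcal{L}_\alpha$ of order $p$ that annihilates $u^i(\cdot;\alpha)$, observe that it acts as multiplication by $\Lambda(\theta,\alpha)$ on the leading far-field term, form the auxiliary radiating field $v$, invoke uniqueness, and read off~\eqref{eq:biggs}. The Chebyshev realisation $\mathcal{L}_\alpha=T_p(-\ri\partial_{x_1}/k)-(-1)^p\cos(p\alpha)$ is exactly right, and the corner-mode count $M=\sum_j(q_j-1)$ is the right bookkeeping.

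However, there is a genuine gap at the most important step, and it is precisely the point where the rational-polygon hypothesis enters. You assert that each term $\mathcal{L}_{\alpha_m}u^s(\cdot;\alpha_m)$ ``has boundary trace $-\mathcal{L}_{\alpha_m}u^i(\cdot;\alpha_m)\equiv 0$ on $\partial\Omega$.'' That conclusion does not follow from $u^s=-u^i$ on $\partial\Omega$ alone, because $\mathcal{L}_\alpha$ is not a tangential operator on any edge that is not horizontal: on an edge inclined at angle $\phi$, $\partial_{x_1}=\cos\phi\,\partial_s-\sin\phi\,\partial_n$ mixes tangential and normal derivatives, and the odd powers of $\partial_n$ applied to $u^t:=u^s+u^i$ are not zero on $\partial\Omega$ merely because $u^t$ is. The correct argument is a reflection/parity one on the Fourier side: on $|\xi|=k$ the symbol of $-\ri\partial_{x_1}/k$ is $\cos\psi$, reflection in an edge of inclination $\phi$ sends $\psi\mapsto 2\phi-\psi$, and $\cos(p\psi)$ is invariant under this map exactly when $2p\phi\in 2\pi\Z$, i.e.\ $\phi=n\pi/p$. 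It is only this that makes $\mathcal{L}_\alpha u^t$ vanish on every edge, and it is the sole place where Definition~\ref{def:RationalPolygons} is actually needed; your proof never invokes it. Secondly, you conflate two linear systems: the $M$ constraints coming from killing the inadmissible Meixner corner modes at the vertices are not literally the system~\eqref{eq:sampling} appearing in Assumption~\ref{ass:bexists}; the latter is a reformulation in terms of the far-field data $\hat D(\alpha_m,\alpha_{m'})$ obtained via reciprocity and Green-type identities, and that identification is itself a nontrivial part of the argument that you have elided. Finally, ``Helmholtz uniqueness forces $v\equiv 0$'' quietly assumes $v$ lies in the right energy/radiation class; since $\mathcal{L}_\alpha$ is order $p$, this is exactly what the corner-mode cancellation is for, so the uniqueness step and the corner step are logically coupled rather than sequential.
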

	
	The beauty of Theorem~\ref{th:embedding} is that, given far-field patterns $D(\theta,\alpha_m)$ for distinct $\alpha_1,\ldots,\alpha_M$, the  embedding formula \eqref{eq:biggs} provides an exact expression for $D(\theta,\alpha)$, valid for all $(\theta,\alpha)\in\T$. Referring to the example geometries above: $M=8$ for the square, $M=17$ for the right-angled isosceles triangle, and $M=2$ for the screen.
	
	Despite the remarkable implications of Theorem~\ref{th:embedding}, to the best knowledge of the authors, embedding formulae have not found significant use in computational scattering applications. This may be surprising, as the formulae appear to provide a highly efficient and $k$-independent means for computing {$D(\theta,\alpha)$ for all $(\theta,\alpha)\in\T$}. However, although \eqref{eq:biggs} is exact in principle, we will now see that these formulae are incredibly sensitive to numerical errors in the canonical far fields~$D(\theta,\alpha_m$).
	
	We define the numerical analogue of \eqref{eq:biggs}
	\begin{equation}\label{eq:Dnaive}
		\Dnaive_N(\theta,\alpha):=\frac{{\sum_{m=1}^M b_m(\alpha)\Lambda(\theta,\alpha_m){D_N}(\theta,\alpha_m)}}{\Lambda(\theta,\alpha)},
	\end{equation}
	where $D_N(\cdot,\alpha)$ is some numerical approximation to the far-field pattern $D(\cdot,\alpha)$ for $\alpha\in\bbS$, such that $D_N(\cdot,\alpha)\to D(\cdot,\alpha)$ pointwise as $N\to\infty$. We reserve discussion about the approximation of the coefficients $b_m$ until \S\ref{sec:sampling}, for now we assume that they exist and are known exactly.
	
	Algorithmically, \eqref{eq:Dnaive} requires solution of $M$ canonical problems corresponding to incident angles $\alpha_1,\dots,\alpha_M$, after which evaluation for any $(\theta,\alpha)\in\T$ is straightforward. However, for any given $\alpha$, if we consider $\theta\approx\theta_0$, where $\Lambda(\theta_0,\alpha)=0$, it is not hard to see that we immediately run into problems with the representation \eqref{eq:Dnaive}. For the exact theoretical formula \eqref{eq:biggs}, when $\theta=\theta_0$ the theorem states that the value of $D(\theta,\alpha)|_{\theta=\theta_0}$ is determined by the rate at which both the denominator and numerator go to zero as $\theta\to\theta_0$. In the approximate case \eqref{eq:Dnaive}, there are no guarantees that the numerator tends to zero as $\theta\to\theta_0$\revise{; a zero of the numerical approximation is likely close to $\theta_0$, but not at $\theta_0$}. Instead, we expect that the numerator will tend to something small \revise{at $\theta_0$}, approximately zero, but not zero. This \revise{numerical artifact is referred to as a \emph{pole-zero pair}, and} will lead to arbitrarily large errors in $\Dnaive_N(\theta,\alpha)$, because this `small' number is multiplied by unbounded values of $1/\Lambda(\theta,\alpha)$. In this sense~\eqref{eq:Dnaive} is numerically ill-conditioned; this effect can be seen in Figure \ref{fig:illcond}. These qualitative statements are made precise later by Lemma~\ref{lem:HatBound}. \revise{To address these pole-zero pairs, we will reformulate \eqref{eq:biggs} and \eqref{eq:Dnaive} as complex contour integrals. This technique is natural when dealing with removable singularities, see \cite[\S2.1]{AuKrTr:14} and the references [28], [51], and [76] therein.}
	
	\begin{figure}[h]
		\centering
		\includegraphics[width=0.7\linewidth]{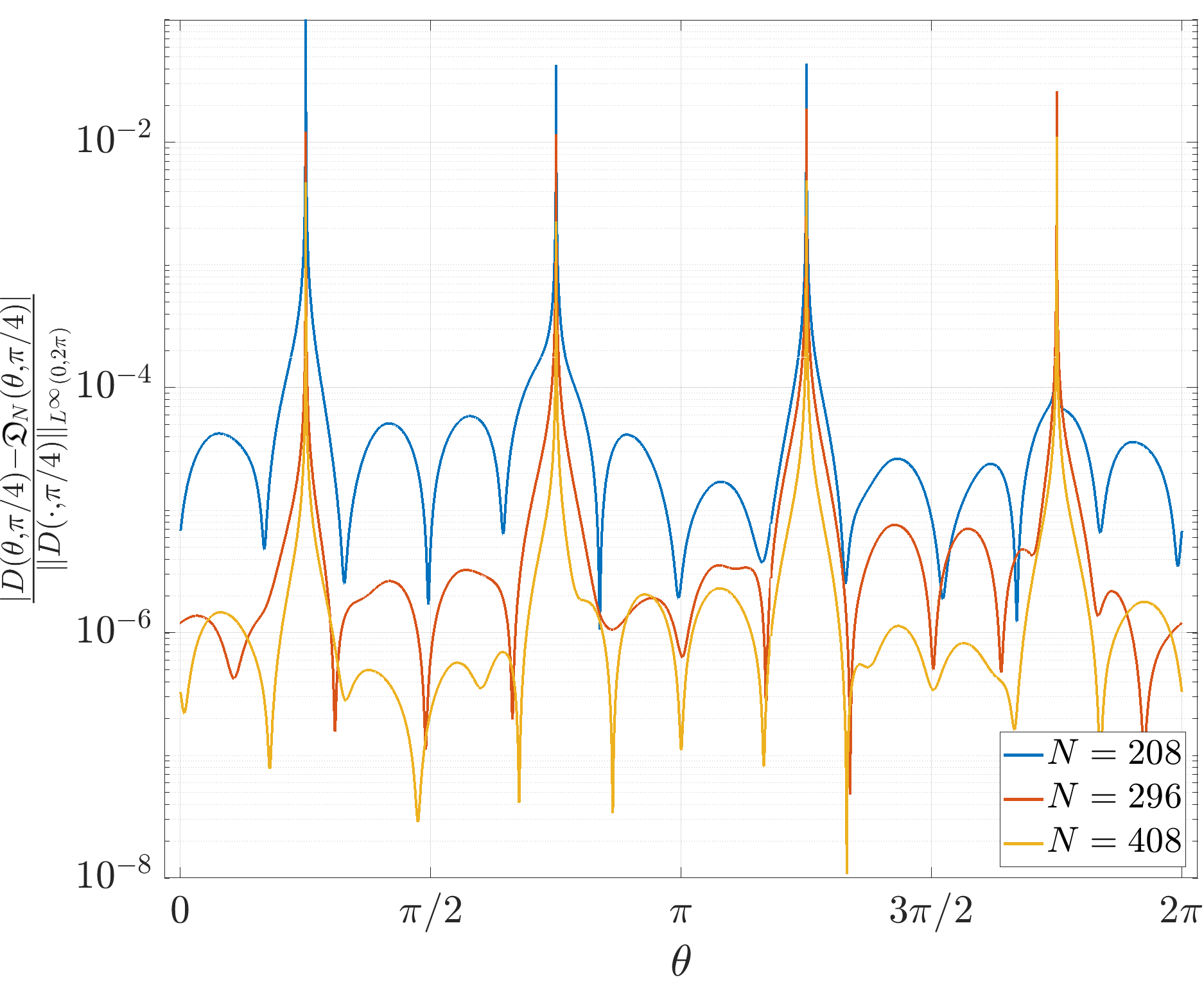}
		\caption{Example of unbounded errors which occur when applying the naive embedding approximation \eqref{eq:Dnaive} directly. Here $\Omega$ is \revise{a square of diameter 2} and $k=10$. {The numerical solver used is described in \S\ref{sec:stdbem}.}}
		\label{fig:illcond}
	\end{figure}
	We summarise this in the first fundamental \revise{challenge} of this paper:
	\begin{itemize}
		\item[\pone] The naive embedding approximation \eqref{eq:Dnaive} is highly sensitive to numerical errors in the canonical far-field patterns.
	\end{itemize}
	
	Henceforth, to simplify the presentation we write
	\begin{equation}\label{eq:hattyboomboom}
		\hat{D}(\theta,\alpha):=\Lambda(\theta,\alpha){D}(\theta,\alpha),\quad\hat{D}_N(\theta,\alpha):=\Lambda(\theta,\alpha){D}_N(\theta,\alpha).
	\end{equation}
	We now focus on calculating the coefficients $\bfb:=[b_1,\ldots,b_M]^T$, which, as we will see, poses a second practical \revise{challenge}.
	Using \eqref{eq:biggs} and the reciprocity principle (see, e.g., \cite[Theorem~3.15]{CoKr:13}), which implies that $\hat{D}(\theta,\alpha) = (-1)^{p+1}\hat{D}(\alpha,\theta)$ for all $ {(\theta,\alpha)\in\T}$, 
	we see that the coefficients $\bfb$ satisfy
	\begin{align}\label{eq:sampling}
		A\bfb=\bfd, \quad\text{where }A:=[\hat{D}(\alpha_m,\alpha_{m'})]_{m,m'=1}^M,\quad\bfd=(-1)^{p+1}[\hat{D}(\alpha,\alpha_{m})]_{m=1}^M.
	\end{align}
	{In {Theorem \ref{th:embedding} and }\cite{Bi:06} the following is assumed:
		\begin{ass}\label{ass:bexists}
			\revise{For any} distinct canonical incident angles $\alpha_1,\ldots,\alpha_M$, the system \eqref{eq:sampling} has a unique solution.
		\end{ass}
	} {Under this assumption, the coefficients $\bfb$ can be derived from readily available quantities, namely the canonical far-field patterns $D(\theta,\alpha_m)$, for $m=1,\ldots,M$. We remark that we have conducted tens of thousands of numerical experiments on varying geometries and wavenumbers, and in each case we have found a set of canonical incident angles $\alpha_1,\ldots,\alpha_M$ satisfying Assumption~\ref{ass:bexists}. However, in many of these experiments we have also found distinct canonical incident angles under which Assumption~\ref{ass:bexists} is not satisfied, {hence the coefficients $\bfb$ cannot be found}}. For example, for the screen problem ($M=2$), if we choose $\alpha_1$ and $\alpha_2$ such that $\cos\alpha_1=-\cos\alpha_2$ then $\Lambda(\alpha_1,\alpha_2)=\Lambda(\alpha_2,\alpha_1)=0$, thus $A$ is the zero matrix and \eqref{eq:sampling} cannot be solved. For a second example, consider the equilateral triangle ($M=12$) with equispaced canonical incident angles $\alpha_m=a + 2(m-1)\pi/M$ for some $a\in\bbS$. Numerical approximation to $A$ suggests that the condition number blows up as $a\to0$; see Figure~\ref{fig:badtricond25p8}.
	
	\begin{figure}
		\centering
		\includegraphics[width=0.5\linewidth]{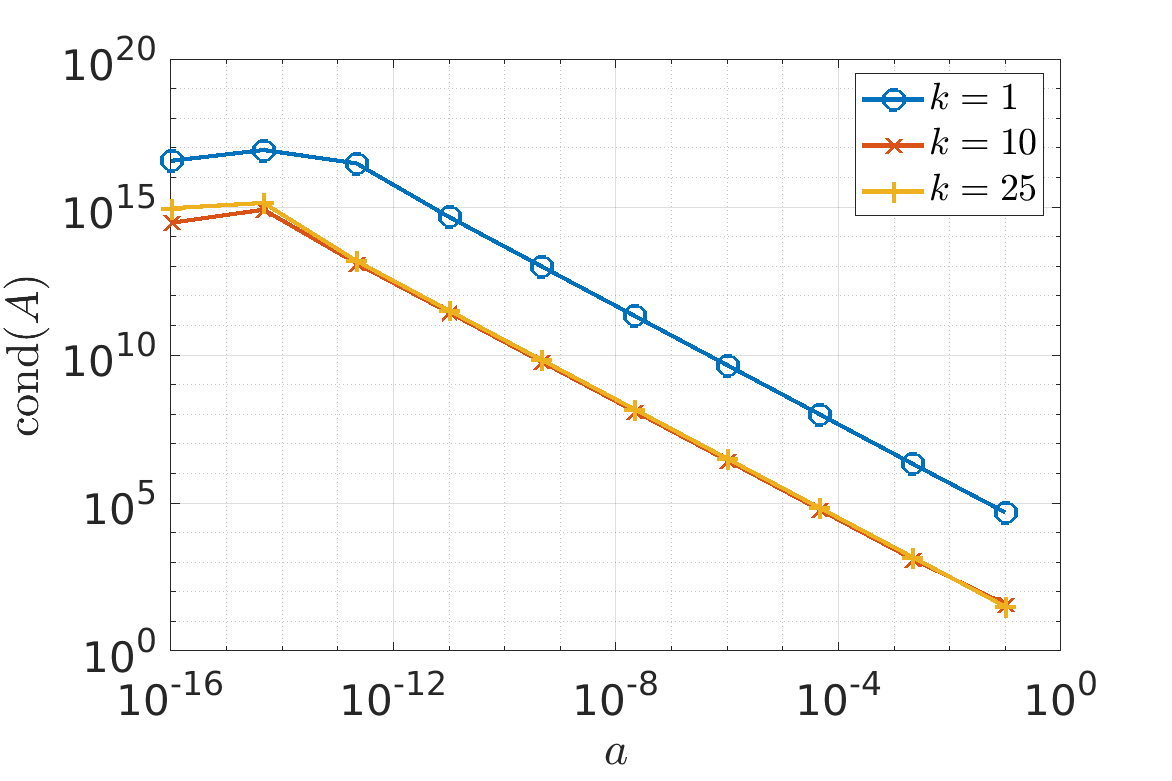}
		\caption{Blow-up of (approximation\revise{s} to) the condition number of $A$ as $a\to0$, for the equilateral triangle with $\alpha_m=a + (m-1)\pi/6$, $m=1,\ldots,12$. The approximation was computed using the method described in \S\ref{sec:stdbem}, with $N=375$.}
		\label{fig:badtricond25p8}
	\end{figure}

	{Even if Assumption \ref{ass:bexists} holds, {$A$ may have a large} condition number. In this case,} the \revise{coefficient} vector $\bfb$ may have large entries, amplifying numerical errors in~\eqref{eq:Dnaive}. These issues are summarised in the second fundamental \revise{challenge} below:
	
	\begin{itemize}
		\item[\ptwo] {In general,} it is unclear how to choose the canonical incident angles so that Assumption~\ref{ass:bexists} holds and hence how to {determine the coefficients $\bfb$}. Even if Assumption~\ref{ass:bexists} holds, it is still unclear how to ensure that the coefficients $\bfb$ have a small norm.
	\end{itemize}
	
	\subsection{Aims and outline of this paper}
	
	In this paper, we address the fundamental \revise{challenge}s {\pone} and {\ptwo}, reformulating the embedding formula~(\ref{eq:Dnaive}) for rational polygons so that it can be of practical use with approximate far-field patterns. %
	
	In \S\ref{sec:fixp1}, we address \pone. We reformulate the embedding formula~(\ref{eq:Dnaive}) as a complex contour integral to reduce the sensitivity to numerical errors. This is quantified by Theorem \ref{thm:semidiscrete_error}. This contour integral may be evaluated by residue calculus, except in a very small set of cases where rounding errors can affect the result. In this case, we interpolate the $k$-dependent part of the integrand by a quadratic polynomial so that the cost of evaluating the integral is independent of $k$. This interpolation is done in such a way that the value of the integral is unchanged. For simplicity, in \S\ref{sec:fixp1} we assume that the embedding coefficients $\bfb$ exist and are exact.
	
	In \S\ref{sec:sampling}, we address \ptwo, and consider the implications of numerical approximation of the embedding coefficients $\bfb$. We cannot solve this \revise{challenge} theoretically, so instead, we oversample, taking more than $M$ canonical far-field patterns{, under the assumption that this gives us a broader space of \revise{coefficient} vectors to choose from. We consider two strategies for selecting a \revise{coefficient} vector from this enhanced space. The first strategy solves the redundant linear system via a regularising truncated singular value decomposition. This approach is partially supported by theory, which informs the choice of the truncation parameter. The second approach chooses a subset of $M$ incident angles which are in some sense optimal, and discards the remaining redundant incident angles. This approach has less theoretical justification, but appears to be more efficient and accurate in practice.}
	
	In \S\ref{sec:numerics}, we present numerical results. These demonstrate the effectiveness of our method via numerical examples, add empirical justification to the approach of \S\ref{sec:sampling} and demonstrate the frequency-independence of our method at high frequencies.
	
	\section{Reformulating the embedding formula}\label{sec:fixp1}
	
	This section addresses \pone. The basic idea of our approach is as follows: It is well-known that the far-field pattern $D(\theta,\alpha)$ is \revise{an entire function} with respect to observation angle $\theta$ (see, e.g., \cite[\S2.2]{CoKr:13}\revise{; note that by reciprocity \cite[Theorem~3.17]{CoKr:13} it is also entire in $\alpha$, but we do not require this here}). \revise{It is then clear from \eqref{eq:Lambda} and \eqref{eq:hattyboomboom} that any finite sum of $\hat{D}(\theta,\alpha)$ is entire in $\theta$.} Hence, we can complexify $\theta$, and, using Cauchy's integral formula, express \eqref{eq:biggs} as a complex contour integral
	
	\begin{equation}\label{eq:CauchyFF}
		D(\theta,\alpha) = \frac{1}{2\pi\ri}\oint_\gamma\frac{\sum_{m=1}^M b_m(\alpha)\hat {D}(z,\alpha_m)}{\Lambda(z,\alpha)(z-\theta)}\rd{z},
	\end{equation}
	where $\gamma$ is any closed contour containing $\theta$, oriented anti-clockwise in $\C$. \revise{This is the only constraint on $\gamma$, recalling that the numerator of the integrand is entire and the singularities of $1/\Lambda(z,\alpha)$ are all removable (Theorem \ref{th:embedding}).} The advantage of \eqref{eq:CauchyFF}, compared to \eqref{eq:biggs}, is that we can choose $\gamma$ in such a way that the magnitude of the denominator in the integrand remains bounded below, away from zero, for $z\in\gamma$. This is in contrast to \eqref{eq:biggs}, where the magnitude of the denominator has no lower bound, and small errors in the numerator lead to arbitrarily large errors overall.
	
	We will show that the approximation
	\begin{equation}\label{eq:CauchyFFN}
		\cD_N(\theta,\alpha; \gamma) = \frac{1}{2\pi\ri}\oint_\gamma\frac{\sum_{m=1}^M b_m(\alpha)\hat{D}_N(z,\alpha_m)}{\Lambda(z,\alpha)(z-\theta)}\rd{z},
	\end{equation}
	is well-conditioned in terms of numerical errors in $\hat{D}_N(z,\alpha_m)$, because we can always choose $\gamma$ to be a safe distance from the poles where $\Lambda(z,\alpha)=0$, and a safe distance from the pole at $\theta$, so that the denominator does not get too large; hence the numerical errors are not significantly amplified. We now calculate the locations of these poles.
	
	\begin{lemma}\label{lem:nearbypoles}
		Given $\alpha\in[0,2\pi)$, the set of poles of $\Lambda{(\cdot,\alpha)}$ is given by
		\[ %
		\Theta_\alpha := \{\theta\in\C:\Lambda(\theta,\alpha)=0\}=\left\{\begin{array}{ll}
			\{\pm\alpha+({2n+1})\pi/{p}: n\in\mathbb{Z}\},&p\text{ odd},\\
			\{\pm\alpha+{2n}\pi/{p}: n\in\mathbb{Z}\},&p\text{ even.}
		\end{array}\right.
		\] 
	\end{lemma}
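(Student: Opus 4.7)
The plan is a direct computation of the zero set of $\Lambda(\cdot,\alpha)$ (strictly, these are the poles of $1/\Lambda(\cdot,\alpha)$; $\Lambda$ itself, being built from cosines, is entire). The key tool is the elementary identity
\[
\cos u - \cos v = -2\sin\!\left(\tfrac{u+v}{2}\right)\sin\!\left(\tfrac{u-v}{2}\right),
\]
valid for complex $u,v$, so that $\cos u = \cos v$ if and only if $u = \pm v + 2n\pi$ for some $n\in\Z$. This characterisation works for $\theta\in\C$ as well as $\theta\in\R$, so there is no extra subtlety from complexifying $\theta$.

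First I would split on the parity of $p$, since the factor $(-1)^p$ in \eqref{eq:Lambda} behaves differently in the two cases. For $p$ even, $\Lambda(\theta,\alpha)=0$ reduces directly to $\cos(p\theta)=\cos(p\alpha)$, and the identity above gives $p\theta = \pm p\alpha + 2n\pi$ for some $n\in\Z$, i.e.\ $\theta = \pm\alpha + 2n\pi/p$, matching the stated set. For $p$ odd, $(-1)^p=-1$, so the equation becomes $\cos(p\theta) = -\cos(p\alpha) = \cos(p\alpha+\pi)$. Applying the same identity gives $p\theta = \pm(p\alpha+\pi) + 2n\pi$, that is $\theta = \pm\alpha \pm \pi/p + 2n\pi/p$.

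The remaining bookkeeping step is to observe that in the $p$ odd case, both choices of sign on the $\pi/p$ term produce the same set. Explicitly, $\alpha + \pi/p + 2n\pi/p = \alpha + (2n+1)\pi/p$, while $-\alpha - \pi/p + 2n\pi/p = -\alpha + (2n-1)\pi/p$, and as $n$ ranges over $\Z$ the integer $2n-1$ also ranges over all odd integers, so this set coincides with $\{-\alpha + (2m+1)\pi/p : m\in\Z\}$. Combining gives $\{\pm\alpha + (2n+1)\pi/p : n\in\Z\}$, as required.

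There is no real obstacle here; the only mild pitfall is making sure that the two $\pm$ signs appearing in $\pm(p\alpha+\pi)$ for odd $p$ are consolidated correctly so that the final description is in terms of a single $\pm$ in front of $\alpha$ and a single integer parameter $n$. Once that reindexing is carried out, the two cases together yield exactly the statement of the lemma.
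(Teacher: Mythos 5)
Your proof is correct and follows essentially the same route as the paper's: the paper factors $\Lambda$ directly via a sum/difference--to--product identity into a product of two cosines (odd $p$) or two sines (even $p$) and reads off the zeros of each factor, while you apply the equivalent characterisation $\cos u = \cos v \iff u = \pm v + 2n\pi$ (derived from the same identity) to the equation $\cos(p\theta) = (-1)^p\cos(p\alpha)$; the two are the same trigonometric maneuver, just presented slightly differently. Your careful reindexing in the odd-$p$ case is exactly the bookkeeping the paper leaves implicit in ``the location of the poles immediately follows.''
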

	\begin{proof}
		Noting that
		\[ \Lambda(\theta,\alpha) = \left\{\begin{array}{ll}
			2\cos\left(\frac{p(\theta+\alpha)}{2}\right)\cos\left(\frac{p(\theta-\alpha)}{2}\right),&p\text{ odd},\\
			-2\sin\left(\frac{p(\theta+\alpha)}{2}\right)\sin\left(\frac{p(\theta-\alpha)}{2}\right),&p\text{ even},
		\end{array}\right.
		\]
		the location of the poles immediately follows.
	\end{proof}
	Note that the elements of $\Theta_\alpha$ are all real.
	
	\begin{defn}[Nearby poles]\label{def:nearbypoles}
		Given $\theta$, we define %
		\[
		\theta_0:=\argmin_{\tilde\theta_0\in\Theta_\alpha}|\theta-\tilde\theta_0|_{2\pi},
		\]
		the closest pole to $\theta$, \revise{where $|\theta-\theta'|_{2\pi}:=\min_{n\in\Z}\{|\theta-\theta'+2n\pi |\}$. Similarly,}
		\[ \theta'_0:=\left\{
		\begin{array}{ll}
			\argmin_{\tilde\theta'_0\in\Theta_\alpha\setminus\{\theta_0\}}|\theta_0-\tilde\theta'_0|_{2\pi},\quad&\theta_0\not\in\{n\pi/p:n\in\Z\}\\
			\theta_0,\quad&\theta_0\in\{n\pi/p:n\in\Z\}
		\end{array}
		\right.
		\]
		\revise{is} the closest pole to $\theta_0$ in the case where $\theta_0'\neq\theta_0$, corresponding to the poles being order one, and $\theta_0'=\theta_0$ corresponding to a pole of order two.
	\end{defn}
	
	Intuitively, it makes sense to choose $\gamma$ such that it encloses $\theta$, remaining as far as possible from the elements of $\Theta_\alpha$ in order to avoid blow-up of the integrand. Doing so may require us to enclose $\theta_0$ and possibly $\theta_0'$ inside $\gamma$, if these poles are close to $\theta$. This is the idea behind the following theorem.
	
	\begin{theorem}\label{thm:semidiscrete_error}
		Suppose the approximations $D_N(z,\alpha_m)$, for $m=1,\ldots,M$, satisfy
		\[
		|D(z,\alpha_m)-D_N(z,\alpha_m)|\leq \epsilon_{m},\quad\text{ for }(z,\alpha_m)\in\Psi\times\bbS
		\]
		where $\Psi:=\{|\Im z|<\ln(3+\pi^2/64)/p\}$ and $\epsilon_m$, $m=1,\ldots,M$, are $N$-dependent constants. Then there exists a closed rectangular complex contour $\gamma$ enclosing $\theta$, $\theta_0$, and $\theta_0'$, such that
		\[
		\left|D(\theta,\alpha)-\cD_N(\theta,\alpha;\gamma)\right|\leq C\|\epsilon\|_2,
		\]
		where $\|\epsilon\|_2:=\sqrt{\sum_{m=1}^M |\epsilon_m|^2}$ and
		\begin{equation}\label{eq:linconst}
			C:=\frac{128\left(5\pi + 4\ln(3+\pi^2/64)\right)\left(6+\pi^2/64\right)}{\pi^4}\|\bfb\|_2.
		\end{equation}
	\end{theorem}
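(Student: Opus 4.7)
The plan is to express $D(\theta,\alpha)-\cD_N(\theta,\alpha;\gamma)$ as a single contour integral and then apply the ML inequality to a judiciously chosen rectangle $\gamma$. Subtracting \eqref{eq:CauchyFFN} from \eqref{eq:CauchyFF} and using $\hat{D}(z,\alpha_m)=\Lambda(z,\alpha_m)D(z,\alpha_m)$,
\[
D(\theta,\alpha)-\cD_N(\theta,\alpha;\gamma)=\frac{1}{2\pi\ri}\oint_\gamma\frac{\sum_{m=1}^{M}b_m(\alpha)\Lambda(z,\alpha_m)e_m(z)}{\Lambda(z,\alpha)(z-\theta)}\rd{z},
\]
where $e_m(z):=D(z,\alpha_m)-D_N(z,\alpha_m)$ satisfies $|e_m(z)|\leq\epsilon_m$ throughout $\Psi$. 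The Cauchy--Schwarz inequality then separates the error from the geometry:
\[
\left|\sum_{m=1}^{M}b_m(\alpha)\Lambda(z,\alpha_m)e_m(z)\right|\leq\|\bfb\|_2\|\epsilon\|_2\max_{1\leq m\leq M}|\Lambda(z,\alpha_m)|.
\]

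Next, I would take $\gamma$ to be the rectangle with horizontal sides at $\Im z=\pm h$ where $h:=\ln(3+\pi^2/64)/p$ (so $\gamma\subset\Psi$), and with vertical sides positioned so that $\gamma$ encloses $\theta$, $\theta_0$, and $\theta_0'$, while keeping a horizontal separation of at least $\pi/(2p)$ from $\theta$ and from every element of $\Theta_\alpha\setminus\{\theta_0,\theta_0'\}$. Since, by Lemma~\ref{lem:nearbypoles}, consecutive real zeros of $\Lambda(\cdot,\alpha)$ are spaced by at most $\pi/p$, this placement is always achievable; a short case analysis based on whether $\theta_0=\theta_0'$ gives a perimeter bound of at most $(5\pi+4\ln(3+\pi^2/64))/p$.

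On $\gamma$ I would then establish three pointwise bounds. First, the upper bound $\max_m|\Lambda(z,\alpha_m)|\leq\cosh(p|\Im z|)+1\leq\cosh(\ln(3+\pi^2/64))+1$, which rearranges into the factor proportional to $6+\pi^2/64$. Second, the lower bound on $|\Lambda(z,\alpha)|$ uses the factorisations from the proof of Lemma~\ref{lem:nearbypoles} together with the identity $|\sin((p/2)(u+\ri v))|^2=\sin^2(pu/2)+\sinh^2(pv/2)$ (and the analogous cosine identity for odd $p$): on horizontal edges the contribution comes from $\sinh(ph/2)$, whilst on vertical edges it comes from the $\pi/(2p)$ margin forcing $|\sin(p(u\pm\alpha)/2)|\geq\sin(\pi/4)$. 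Third, $|z-\theta|$ is bounded below by the constructed margins between $\theta$ and $\gamma$. Combining these bounds with the ML estimate $|D-\cD_N|\leq(|\gamma|/(2\pi))\|\bfb\|_2\|\epsilon\|_2\max_{z\in\gamma}(\max_m|\Lambda(z,\alpha_m)|/(|\Lambda(z,\alpha)||z-\theta|))$ and substituting the explicit values produces $C$ in the form \eqref{eq:linconst}, with the $p$-dependence cancelling between $|\gamma|=O(1/p)$ and the denominator bound.

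The main obstacle is the uniform lower bound on $|\Lambda(z,\alpha)|$ in step two. The argument must handle both parities of $p$ and must cover the degenerate case $\theta_0=\theta_0'$ (where $\gamma$ encloses a double zero of $\Lambda(\cdot,\alpha)$) as well as the generic case $\theta_0\neq\theta_0'$ (two enclosed simple zeros); in each case the bound has to be uniform in $\alpha$ and in the position of $\theta$ inside $\gamma$. The specific height $h=\ln(3+\pi^2/64)/p$ is engineered so that the horizontal-edge estimate coming from $\sinh(ph/2)$ and the vertical-edge estimate coming from the $\pi/(2p)$ margin balance against the corresponding upper bound $\cosh(ph)+1$, yielding the clean constant \eqref{eq:linconst}.
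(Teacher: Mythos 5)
Your overall architecture is exactly the paper's: write $D-\cD_N$ as a single contour integral of the pointwise errors $e_m$, pull out $\|\bfb\|_2\|\epsilon\|_2$ by Cauchy--Schwarz, take a rectangular contour with horizontal edges at $\Im z=\pm\ln(3+\pi^2/64)/p$, and then bound the denominator below edge by edge. The paper organizes the lower bound through three separate lemmas (a real-line bound $|\Lambda(\theta,\alpha)|\geq\tfrac{p^2}{8}|\theta-\theta_0|\,|\theta-\theta_*|$, monotonicity of $|\Lambda|$ in $|\Im z|$, and a two-sided exponential bound), rather than the pointwise identity $|\sin(x+\ri y)|^2=\sin^2 x+\sinh^2 y$ you invoke, but they amount to the same computation and the same balancing of the height against the margins at $z_\pm$.

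The one step that does not survive scrutiny is the claimed margin $\pi/(2p)$. You assert that the vertical edges of $\gamma$ can simultaneously keep horizontal distance at least $\pi/(2p)$ from $\theta$ \emph{and} from every element of $\Theta_\alpha\setminus\{\theta_0,\theta_0'\}$, on the grounds that consecutive real zeros of $\Lambda(\cdot,\alpha)$ are at most $\pi/p$ apart. That is not always achievable: if you place the left vertical edge at the midpoint between $\theta_0$ and the next pole $\theta_0'-2\pi/p$ to its left, you do get margin $\pi/(2p)$ from both poles, but $\theta$ is only constrained to have $\theta_0$ as its nearest element of $\Theta_\alpha$, so $\theta$ can sit exactly at that same midpoint, and the factor $1/(z-\theta)$ is then uncontrolled on that edge. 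The paper dodges this by instead placing $z_\pm$ at midpoints between a coalescence point $\theta_*\pm\pi/p\in\Theta_*$ and the pole immediately beyond it, which yields the weaker but robust simultaneous bound $\mathrm{dist}(z_\pm,\Theta_\alpha)\geq\pi/(4p)$, $\mathrm{dist}(z_\pm,\Theta_*)\geq\pi/(4p)$ and $|\theta-z_\pm|\geq\pi/(4p)$; this feeds into $\cM(\gamma)\geq\pi^3/(512p)$ and hence the factor $128$ in \eqref{eq:linconst}. Incidentally, your perimeter bound $|\gamma|\leq(5\pi+4\ln(3+\pi^2/64))/p$ is the one that reproduces \eqref{eq:linconst} exactly; the displayed intermediate bound on $|\gamma|$ in the paper's proof carries a spurious factor of $1/4$ that does not propagate to the stated constant. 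So your route is the paper's route; you just need the $\pi/(4p)$ margin rather than $\pi/(2p)$, achieved by anchoring $z_\pm$ to the coalescence points rather than to neighbouring poles.
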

	This theorem is \revise{proved} in \S\ref{sec:proof}, where details of the contour $\gamma$ can also be found.
	
	Theorem \ref{thm:semidiscrete_error} should be interpreted in the following way: if the error in the canonical far-field approximations is bounded, then the error in the contour integral \eqref{eq:CauchyFFN} is bounded. Therefore, the reformulated embedding formula \eqref{eq:CauchyFFN} succeeds where our naive representation \eqref{eq:Dnaive} failed, and we have addressed \pone. It is clear from \eqref{eq:linconst} that the error will also depend on the size and accuracy of the coefficients $\bfb$ - this is addressed in \S\ref{sec:sampling}.
	
	The sceptical reader may point out that: (i) we have analytically extended our approximations ${D}_N(\cdot,\alpha_m)$, a process which is known to be ill-conditioned \cite{trefethen2020quantifying}, \revise{and} (ii) quadrature evaluation of~\eqref{eq:CauchyFFN} may carry a frequency-dependent cost. Fortunately, if we further assume that our numerical approximation $D_N(\theta,\alpha)$ is \revise{analytic for $\theta$ in a complex neighbourhood of $\bbS$} (justified below in Remark \ref{rem:numanal}), we can \revise{choose $\gamma$ to be a closed contour in this neighbourhood containing $\{\theta,\theta_0,\theta_0'\}$ and }evaluate the integral \eqref{eq:CauchyFFN} by residue calculus:
	\begin{align}
		\cD_N(\theta,\alpha,\gamma) =& \frac{\sum_{m=1}^Mb_m(\alpha)\hat{D}_N(\theta,\alpha_m)}{\Lambda(\theta,\alpha)} \nonumber\\
		&-\begin{cases}\displaystyle \sum_{\chi \in\{\theta_0,\theta_0'\}}\frac{ \sum_{m=1}^M b_m(\alpha)\hat{D}_N(\chi,\alpha_m)}{p(\chi-\theta)\sin(p\chi)},&\revise{\theta_0\neq\theta_0',}\\
			\displaystyle\revise{2\frac{\sum_{m=1}^M b_m(\alpha)\left[(\theta_0-\theta)\frac{\displaystyle\partial\hat{D}_N(z,\alpha)}{\displaystyle\partial z}|_{z=\theta_0}-\hat{D}_N(\theta_0,\alpha_m)\right]}{p^2(\theta_0-\theta)^2\cos(p\theta_0)}},&\revise{\theta_0=\theta_0',}
		\end{cases}\quad\theta\not\in\Theta_\alpha.\label{eq:res}
	\end{align}
	\revise{The second case on the right-hand side corresponds to the double pole.}
	As in Theorem~\ref{th:embedding}, at the points $\theta\in\Theta_\alpha$ {we can compute} $\cD_N$ using L'H\^{o}pital's rule.
	
	By representing the integral as \eqref{eq:res}, we address the two \revise{concerns} above: (i) because all poles are in $\Theta_\alpha\cup\{\theta\}$, no analytic continuation is required, \revise{and} (ii) we have evaluated the integral without quadrature.
	
	It is instructive to notice that the first term on the right-hand side of \eqref{eq:res} is precisely \eqref{eq:Dnaive}. Therefore, the second term on the right-hand side of \eqref{eq:res} may be interpreted as a correction to \eqref{eq:Dnaive}. Conversely, in the exact case (where $D_N$ is replaced by $D$), the formula \eqref{eq:res} is exact, because the residues are zero. This is because the points at $\Theta_\alpha$ are removable singularities in theory, manifesting as \revise{pole-zero pairs} in practice.
	
	\begin{rem}[Analyticity of numerical solutions]\label{rem:numanal}
		The assumption required by \eqref{eq:res}, that $D_N(\theta,\alpha)$ is \revise{analytic for $\theta$ in a neighbourhood of $\bbS$}, is entirely reasonable. In finite difference / element / volume and boundary element methods for solving \eqref{Helm}-\eqref{HelmSRC}, the far-field pattern \eqref{FFlabelTest} is approximated by integrating some (typically piecewise-analytic) data against an \revise{entire} kernel (see e.g. \cite{Mo:95} and \cite[\S3.5]{CoKr:13}). Therefore, the resulting approximation is \revise{entire}, and the estimate of Theorem \ref{thm:semidiscrete_error} also applies to the formula~\eqref{eq:res}.
	\end{rem}

	\subsection{Evaluation of residues in finite precision arithmetic}\label{sec:FPA}
	
	Until now, we have not discussed the implications of rounding errors, implicitly assuming that all calculations are done in exact arithmetic. When two poles in \eqref{eq:res} coalesce, the corresponding {residues} will grow, and there will be a large amount of numerical cancellation. In finite precision arithmetic, small rounding errors will be amplified; this is commonly known as \emph{catastrophic cancellation}. We first remark that the region where this occurs is much smaller than the region where \eqref{eq:Dnaive} breaks down (see the discussion around $h$ and $H$ in \S\ref{sec:mainalg}), so even if nothing is done to address this issue, \eqref{eq:res} still offers a significant improvement over~\eqref{eq:Dnaive} in practice. Secondly, we remark that variable precision arithmetic (VPA) may be used to address catastrophic cancellation, whereas VPA would not fix the breakdown of \eqref{eq:Dnaive}. Here, we present a fix for this issue, which may be used without VPA.
	
	Suppose we were to evaluate the integral representation \eqref{eq:CauchyFFN} by numerical quadrature.  This offers the advantage that we could choose $\gamma$ such that the denominator remains bounded below, thus the samples at the quadrature nodes remain bounded, avoiding catastrophic cancellation between the quadrature samples (see \cite{IoPaPe:91} for a summary of relevant numerical integration techniques). However, we recall that a potential disadvantage of the quadrature approach (which motivated \eqref{eq:res}) is that the numerator is an oscillatory function, and as $k\to\infty$ this may grow exponentially and/or oscillate increasingly rapidly along certain segments of $\gamma$. Here lies the dilemma: the contour integral approach can avoid catastrophic cancellation, and the residue approach avoids an $O(k)$ factor increase in computational cost due to quadrature. Can we avoid both?
	
	Surprisingly, thanks to an idea we believe to be new, the answer is \emph{yes}. The idea rests on the following key observation. For large $k$ the \emph{integrand} of \eqref{eq:CauchyFFN} is highly oscillatory (on the real line), but the integral still only depends on the integrand's values at three points: $\theta,\theta_0$ and $\theta_0'$. Therefore, we can interpolate the $k$-dependent oscillatory numerator by a quadratic polynomial $\rho_2$, constructed such that 
	$
	\rho_2(\xi)=\sum_{m=1}^M b_m(\alpha)\hat{D}_N(\xi,\alpha_m)$ for $\xi\in\{\theta,\theta_0,\theta_0'\}.
	$
	In the case where $\theta=\theta_0$ or $\theta_0=\theta_0'$ we add the additional constraint that 
	$
	\rho_2'(\theta_0)=\sum_{m=1}^Mb_m(\alpha)\frac{\partial\hat{D}_N}{\partial \theta}(\theta,\alpha_m)|_{\theta=\theta_0},
	$
	since residues at double poles depend on the derivative of the numerator. Obtaining the (approximate) derivative of the far-field pattern is easy in practice, for reasons similar to those given in Remark~\ref{rem:numanal}.
	
	Barycentric interpolation may be used for efficiency (see e.g. \cite{Hi04,SaVi:35}). However, it is not essential for accurate results, because $\gamma$ can be chosen so that $\rho_2$ is only evaluated at a bounded distance from the interpolation points. It follows that
	\begin{equation}\label{eq:Cauchyrho3}
		\cD_N(\theta,\alpha; \gamma) = \frac{1}{2\pi\ri}\oint_\gamma\frac{\rho_2(z)}{\Lambda(z,\alpha)(z-\theta)}\rd{z},
	\end{equation}
	since the residues of the integral depend only on the value of the integrand (and possibly its derivative) at the poles $\xi=\theta,\theta_0,\theta_0'$. At these poles, the integrands and, where appropriate, the derivatives of the integrand in \eqref{eq:CauchyFFN} and \eqref{eq:Cauchyrho3} are equal, and thus by the residue theorem \eqref{eq:CauchyFFN} and \eqref{eq:Cauchyrho3} have the same value. However, \eqref{eq:Cauchyrho3} is independent of $k$, so accurate evaluation by quadrature requires an $O(1)$ cost, instead of $O(k)$, as $k\to\infty$.
	
	\subsection{Proof of Theorem \ref{thm:semidiscrete_error}}\label{sec:proof}
	
	Before we can prove Theorem~\ref{thm:semidiscrete_error}, we require some preliminary results.  First, we note that for certain values of $\alpha$, pairs of poles in $\Theta_\alpha$ coalesce, forming a pole of order two.
	
	\begin{defn}[Coalescence points]\label{def:coaleset}
		
		We define the set of `coalescence points' by:
		\begin{equation}
			\Theta_*:=\left\{\theta\in\bbS:\frac{\partial{\Lambda}}{\partial\theta}(\theta,\alpha)=0\right\} = \{\theta\in\bbS:\theta=n\pi/p,\; n\in \mathbb Z\}.\label{LHoppyLopps2}
		\end{equation}
		With $\theta_0$ and $\theta_0'$ defined as in Definition~\ref{def:nearbypoles}, we define the `nearest coalescence point' as
		\[
		\theta_*=\argmin_{\tilde\theta_*\in\Theta_*}|\theta_0-\tilde\theta_*|_{2\pi}.
		\]
	\end{defn}
	
	When $\theta_0$ and $\theta_0'$ are close to $\theta_*$, the singularity will be stronger. Therefore these points play an important role when quantifying the breakdown of the naive embedding formula~\eqref{eq:Dnaive}. The following result provides a lower bound on $\Lambda(\theta,\alpha)$, and will be useful when choosing the contour $\gamma$ to avoid amplification of errors in the integral representation~\eqref{eq:CauchyFFN}.
	
	\begin{lemma}\label{lem:HatBound}
		For $\Lambda$ as in \eqref{eq:Lambda}, $\theta_0$ as in Definition \ref{def:nearbypoles} and $\theta_*$ as in Definition \ref{def:coaleset},
		\begin{equation}\label{eq:HatBound}
			\frac{p^2}{8}|\theta-\theta_0||\theta-\theta_*|\leq|\Lambda(\theta,\alpha)|,\quad(\theta,\alpha)\in\T.
		\end{equation}
	\end{lemma}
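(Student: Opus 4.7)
The strategy is to centre everything at $\theta_*$ and reduce the claim to a universal scalar inequality. Let $\xi := \theta - \theta_*$ and $\sigma := \theta_0 - \theta_*$ (signed quantities). By the definitions, $|\sigma| \leq \pi/(2p)$, $|\xi| \leq \pi/p$, and $\xi$ and $\sigma$ have the same sign (since $\theta_0$ is the nearest pole to $\theta$, and so the pole on the far side of $\theta_*$ cannot be $\theta_0$). Using $p\theta_* \in \pi\Z$ (so $\sin(p\theta_*) = 0$ and $\cos(p\theta_*) = (-1)^n$ with $n := p\theta_*/\pi$) together with $\Lambda(\theta_0, \alpha) = 0$, two applications of the cosine addition formula produce the clean identity
\[
  \Lambda(\theta, \alpha) = (-1)^n\bigl[\cos(p\xi) - \cos(p\sigma)\bigr].
\]
Combined with $|\theta - \theta_0| = |\xi - \sigma|$, $|\theta - \theta_*| = |\xi|$, and the substitution $x = p|\xi| \in [0,\pi]$, $y = p|\sigma| \in [0,\pi/2]$, the lemma is equivalent to the scalar inequality
\[
  |\cos x - \cos y| \geq \tfrac{1}{8}\, x\, |x - y|.
\]

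To establish this I would apply $|\cos x - \cos y| = 2|\sin((x+y)/2)|\cdot|\sin((x-y)/2)|$. Since $|x - y|/2 \leq \pi/2$ throughout the range, the elementary bound $|\sin z| \geq (2/\pi)|z|$ (valid for $|z| \leq \pi/2$) gives $|\sin((x - y)/2)| \geq |x - y|/\pi$. For the other factor, the argument $(x+y)/2$ can reach $3\pi/4$, forcing a case split. In Case A ($x + y \leq \pi$) the same linear bound yields $|\sin((x+y)/2)| \geq (x+y)/\pi$, and after clearing common factors the target reduces to $16 y \geq (\pi^2 - 16) x$, which is immediate since $\pi^2 < 16$ and $y \geq 0$. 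In Case B ($x + y > \pi$) the argument $(x+y)/2 \in (\pi/2, 3\pi/4]$ lies in the range where $\sin$ is decreasing, so $|\sin((x+y)/2)| \geq \sin(3\pi/4) = \sqrt{2}/2$; the target then reduces to $x \leq 8\sqrt{2}/\pi$, which holds since $x \leq \pi < 8\sqrt{2}/\pi$.

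The main obstacle is that the convenient linear bound $|\sin z| \geq (2/\pi)|z|$ is valid only for $|z| \leq \pi/2$, and the two arguments $(x \pm y)/2$ both satisfy this together only when $x + y \leq \pi$. Geometrically, the complementary regime corresponds to $\theta$ approaching a coalescence point other than $\theta_*$; analytically, the save is that in this regime $x$ itself is bounded away from $8\sqrt{2}/\pi$ and a flat bound on the sine already suffices. The degenerate cases $\sigma = 0$ (order-two pole, $\theta_0 = \theta_* = \theta_0'$) and $\xi = 0$ are absorbed without special treatment, since the scalar inequality remains valid in both (indeed trivial in the latter).
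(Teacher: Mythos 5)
Your proof is correct. It rests on the same core decomposition as the paper's: write $\Lambda(\theta,\alpha)=\cos(p\theta)-\cos(p\theta_0)$ and factor it as $2\,|\sin\cdot|\,|\sin\cdot|$, then bound each sine factor linearly. (Note that your $p(\xi\pm\sigma)$ are, up to a shift by $p\theta_\ast\in\pi\mathbb Z$, exactly the paper's $p(\theta\mp\theta_0)$, so your equation (before the case split) is precisely the paper's~(2.9).) Where you diverge is in the treatment of the factor $|\sin((x+y)/2)|$: the paper replaces $\theta_0$ by the coalescence point $\theta_\ast$ using the monotonicity inequality $|\sin(p(\theta+\theta_\ast)/2)|\leq|\sin(p(\theta+\theta_0)/2)|$, and then applies $|\sin t|\geq|t|/2$ to both factors at once, asserting both arguments lie in $[-\pi/4,\pi/4]$. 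In fact those arguments can reach $\pm\pi/2$ (take $\alpha$ near $0$ and $\theta$ near $\pi/p$, so $\theta_0\approx0$, $\theta_\ast=0$, $|\theta-\theta_\ast|\approx\pi/p$); the paper's conclusion survives because $|\sin t|\geq|t|/2$ remains valid on $[-\pi/2,\pi/2]$, but the stated $\pi/4$ range is too tight. Your version keeps $\theta_0$ in the sum-of-angles factor and handles the wide range honestly via a case split: for $x+y\leq\pi$ use the linear bound $|\sin t|\geq(2/\pi)|t|$ on both factors, and for $x+y>\pi$ use the flat bound $\sin((x+y)/2)\geq\sqrt2/2$ together with $x\leq\pi<8\sqrt2/\pi$. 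This sidesteps the replacement step and the accompanying bookkeeping about symmetry of poles around coalescence points, and it makes explicit the fact that the arguments can exceed $\pi/4$. Both routes land on the same constant $p^2/8$; yours is slightly more robust in its justification.
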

	\begin{proof}	
		Firstly, by Definition \ref{def:nearbypoles}, we have $
		\Lambda(\theta,\alpha)=
		{\Lambda(\theta,\alpha)-\Lambda(\theta_0,\alpha)}=
		\cos(p\theta)-\cos(p\theta_0)$ 
		and from standard trigonometric identities it follows that
		\begin{equation}\label{Tbound1}
			\left|{{\Lambda}{(\theta,\alpha)}}\right|
			=
			{2\left|\sin\left({p}(\theta-\theta_0)/{2}\right)\right|\cdot\left|\sin\left({p}(\theta_0+\theta)/{2}\right)\right|}.
		\end{equation}
		We first focus on the lower bound of \eqref{eq:HatBound}, for which we will require the inequality
		\begin{equation}\label{eq:dodgyInequality}
			|\sin(p(\theta_*+\theta)/2)|\leq|\sin(p(\theta_0+\theta)/2)|.
		\end{equation}
		
		To see why \eqref{eq:dodgyInequality} holds, we note that the points  $\theta_0\in\Theta_\alpha$ are distributed symmetrically about the points $\theta_*\in\Theta_*$, and we have specified the condition that $\theta_0$ must be the element of $\Theta_\alpha$ closest to $\theta$; hence it follows that $\theta$ and $\theta_0$ both lie on the same side of $\theta_*$ (in a local sense). Hence, it is clear that $p(\theta+\theta_0)/2$ is farther from $p\theta_*$ than $p(\theta+\theta_*)/2$, and this distance is no more than $\pi/(2p)$ by~\eqref{LHoppyLopps2}.
		
		Combining \eqref{Tbound1} with \eqref{eq:dodgyInequality} yields
		\begin{align*}
			|{\Lambda(\theta,\alpha)}|
			&\geq
			{2\left|\sin\left({p}(\theta-\theta_0)/2\right)\right|\cdot\left|\sin\left({p}(\theta_*+\theta)/2\right)\right|}\\
			&={2\left|\sin\left({p}(\theta-\theta_0)/{2}\right)\right|\cdot\left|\sin\left({p}(2\theta_*+(\theta-\theta_*))/{2}\right)\right|}\\
			&={2\left|\sin\left({p}(\theta-\theta_0)/{2}\right)\right|\cdot\left|\sin(p\theta_*)\cos(p(\theta-\theta_*)/2)+\sin(p(\theta-\theta_*)/2)\cos(p\theta_*)\right|}.
		\end{align*}
		By the definition \eqref{LHoppyLopps2} of $\Theta_*$, we have that $\sin(p\theta_*)=0$ and $|\cos(p\theta_*)|=1$, so
		\begin{equation}\label{sinesBeforeBound}
			\left|{\Lambda(\theta,\alpha)}\right|
			\geq{2\left|\sin\left({p}(\theta-\theta_0)/{2}\right)\right|\cdot\left|\sin(p(\theta-\theta_*)/2)\right|}.
		\end{equation}
		From Definition \ref{def:nearbypoles} and~\eqref{LHoppyLopps2}, it follows that the farthest $\theta$ can be from the nearest $\theta_0$ or $\theta_*$ is $\pi/(2p)$. Hence the argument of both sines of \eqref{sinesBeforeBound} is at most $\pi/4$, so we may use the identity $|\sin(x)|\geq|x/2|$ for $0\leq|x|\leq\pi/4$ to obtain the lower bound on $|\Lambda(\theta,\alpha)|$ as claimed.
	\end{proof}
	
	The above result provides an explanation for the blow-up observed in, for example, Figure~\ref{fig:illcond}. With this in mind, we aim to construct the contour $\gamma$ in \eqref{eq:CauchyFFN} so that the denominator in the integrand never gets too {close to zero}. To do this, we will also need a lower bound on $\Lambda$ in the complex plane.
	\begin{lemma}\label{lem:im_change}
		For $\Lambda$ as in \eqref{eq:Lambda} and $c\in\R$,
		\[
		|\Lambda(\theta,\alpha)|\leq|\Lambda(\theta+\ri c,\alpha)|,\quad(\theta,\alpha)\in\T.
		\]
	\end{lemma}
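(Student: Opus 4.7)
The plan is to verify the inequality by a direct computation of $|\Lambda(\theta+\mathrm{i}c,\alpha)|^2 - |\Lambda(\theta,\alpha)|^2$ and show that it is non-negative for every $c\in\R$.

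First, I would expand $\cos(p(\theta+\mathrm{i}c))$ using the complex angle-addition formula to obtain $\cos(p\theta)\cosh(pc) - \mathrm{i}\sin(p\theta)\sinh(pc)$. Substituting into the definition \eqref{eq:Lambda} of $\Lambda$ and using that $(-1)^p\cos(p\alpha)$ is real gives real part $\cos(p\theta)\cosh(pc) - (-1)^p\cos(p\alpha)$ and imaginary part $-\sin(p\theta)\sinh(pc)$.

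Second, writing $u:=\cos(p\theta)$, $v:=\sin(p\theta)$, and $w:=(-1)^p\cos(p\alpha)$, I would compute
\begin{align*}
|\Lambda(\theta+\mathrm{i}c,\alpha)|^2 - |\Lambda(\theta,\alpha)|^2
&= \bigl(u\cosh(pc)-w\bigr)^2 + v^2\sinh^2(pc) - (u-w)^2 \\
&= u^2\bigl(\cosh^2(pc)-1\bigr) + v^2\sinh^2(pc) - 2uw\bigl(\cosh(pc)-1\bigr) \\
&= \sinh^2(pc) - 2uw\bigl(\cosh(pc)-1\bigr),
\end{align*}
where the last line uses $\cosh^2(pc) - 1 = \sinh^2(pc)$ and $u^2 + v^2 = 1$.

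Third, I would factor via the identity $\sinh^2(pc) = (\cosh(pc)-1)(\cosh(pc)+1)$ to rewrite the difference as $(\cosh(pc)-1)\bigl[(\cosh(pc)+1) - 2uw\bigr]$. The first factor is non-negative because $\cosh \geq 1$ on $\R$, and the second is non-negative because $|uw|\leq 1$ while $\cosh(pc)+1\geq 2$, giving the claim. I do not anticipate any genuine obstacle here; the calculation is routine, and the only step worth highlighting is the factorisation $\sinh^2=(\cosh-1)(\cosh+1)$, which makes the sign of the difference manifest without any case analysis on the signs of $u$, $v$, or $w$.
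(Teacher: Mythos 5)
Your proof is correct, but it takes a genuinely different route from the paper. The paper defines $J(\theta,c):=|\Lambda(\theta+\ri c,\alpha)|$, differentiates in $c$, and shows that $\partial J/\partial c$ has the same sign as $c$ (a somewhat involved quotient whose numerator must be sign-analysed), from which $c=0$ is a global minimum. You instead compute the difference of squared moduli $|\Lambda(\theta+\ri c,\alpha)|^2-|\Lambda(\theta,\alpha)|^2$ directly and exhibit it as the manifestly non-negative product $(\cosh(pc)-1)\bigl[(\cosh(pc)+1)-2uw\bigr]$. Both arguments start from the same expansion $\cos(p(\theta+\ri c))=\cos(p\theta)\cosh(pc)-\ri\sin(p\theta)\sinh(pc)$, but yours is more elementary: no calculus, no quotient, and no sign bookkeeping beyond $\cosh\geq 1$ and $|uw|\leq 1$. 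The paper's derivative version does convey the additional information that $|\Lambda(\theta+\ri c,\alpha)|$ is monotone in $|c|$, but for establishing the stated inequality your algebraic factorisation is cleaner and entirely sufficient.
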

	\begin{proof}
		Define $J(\theta,c):=|\Lambda(\theta+\ri c,\alpha)|$. We are interested in how $\Lambda$ changes as we move from the real line in the positive imaginary direction. Therefore, the following quantity will be useful:
		\[
		\frac{\partial J}{\partial c}(\theta,c) = \frac{p\left((-1)^{p+1}\cos(p\alpha)\cos(p\theta) + \cosh(pc)\right)\sinh(pc)}{\sqrt{\left(\cos({p}\alpha)\cosh(pc)+(-1)^{p+1}\cos(p\alpha)\right)^2 + \left(\sin(p\theta)\sinh(pc)\right)^2}},
		\]
		which can be obtained by using the representation
		\[
		\Lambda(\theta+\ri c,\alpha) = \cos(p\theta)\cosh(pc) - \ri \sin(p\theta)\sinh(pc) - (-1)^p\cos(p\alpha).
		\]
		The denominator is clearly positive, because $\theta,\alpha$ and $c$ are all real. We focus on the sign of the numerator. We have $\cos(p\alpha)\cos(p\theta) \geq-1$ and, for all $c\neq0$, $\cosh(pc)>1$, hence
		\[\left((-1)^{p+1}\cos(p\alpha)\cos(p\theta) + \cosh(pc)\right)\sinh(pc)
		\left\{\begin{array}{ll} >0 & \text{when } c>0 \\ = 0 & \text{when } c=0 \\ <0 & \text{when } c<0. \end{array}\right. \]
		Hence $|\Lambda(\theta+\ri c,\alpha)|$ is minimised when $c=0$, proving the assertion.
	\end{proof}
	
	The above result tells us that $|\Lambda(z,\alpha)|$ increases as we move vertically into the complex plane. This a helpful result for evaluating the contour integral~\eqref{eq:CauchyFFN}, because numerical errors are amplified when $|\Lambda|$ is small. Considering that $\Lambda$ is in the numerator and the denominator of~\eqref{eq:CauchyFFN}, we also require the following bound.
	\begin{lemma}\label{lem:lamupperbd}
		For $\Lambda$ as in \eqref{eq:Lambda},
		\[
		\frac{\re^{p|\Im z|}-3}{2} \leq \left|\Lambda(z,\alpha)\right| \leq\frac{\re^{p|\Im z|} + 3}{2},\quad\text{for }(z,\alpha)\in\C\times\bbS.
		\]
	\end{lemma}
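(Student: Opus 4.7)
The plan is to bound $|\cos(pz)|$ above and below in terms of $|\Im z|$, and then use the triangle inequality together with the bound $|(-1)^p\cos(p\alpha)|\leq 1$ to get the two-sided estimate on $|\Lambda(z,\alpha)|$.

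First I would write $z=\theta+\ri c$ with $\theta=\Re z$, $c=\Im z$, and expand
\[
\cos(pz) = \cos(p\theta)\cosh(pc) - \ri\sin(p\theta)\sinh(pc),
\]
so that
\[
|\cos(pz)|^2 = \cos^2(p\theta)\cosh^2(pc) + \sin^2(p\theta)\sinh^2(pc) = \cos^2(p\theta) + \sinh^2(pc).
\]
Since $0\leq\cos^2(p\theta)\leq 1$, this immediately gives the sandwich
\[
|\sinh(p|\Im z|)| \leq |\cos(pz)| \leq \cosh(p|\Im z|).
\]

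Next, writing $\cosh(p|\Im z|)=\tfrac12(\re^{p|\Im z|}+\re^{-p|\Im z|})$ and $|\sinh(p|\Im z|)|=\tfrac12(\re^{p|\Im z|}-\re^{-p|\Im z|})$, the bound $\re^{-p|\Im z|}\leq 1$ yields
\[
\tfrac{1}{2}(\re^{p|\Im z|}-1) \;\leq\; |\cos(pz)| \;\leq\; \tfrac{1}{2}(\re^{p|\Im z|}+1).
\]

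Finally, I would invoke the triangle inequality on $\Lambda(z,\alpha)=\cos(pz)-(-1)^p\cos(p\alpha)$. Since $\alpha\in\bbS$ is real, $|(-1)^p\cos(p\alpha)|\leq 1$, so
\[
|\Lambda(z,\alpha)|\leq|\cos(pz)|+1\leq\tfrac{1}{2}(\re^{p|\Im z|}+1)+1=\tfrac{1}{2}(\re^{p|\Im z|}+3),
\]
and similarly
\[
|\Lambda(z,\alpha)|\geq|\cos(pz)|-1\geq\tfrac{1}{2}(\re^{p|\Im z|}-1)-1=\tfrac{1}{2}(\re^{p|\Im z|}-3),
\]
which is the desired two-sided bound. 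There is no real obstacle here: the argument is a short calculation once one recognizes that the real and imaginary parts of $\cos(pz)$ combine into $\cos^2(p\theta)+\sinh^2(pc)$, after which the $\pm 1$ slack from $\cos(p\alpha)$ and the $\pm 1$ slack from $\re^{-p|\Im z|}$ combine to produce the constants $\pm 3$.
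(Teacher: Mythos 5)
Your argument is correct and follows essentially the same route as the paper: both apply the triangle inequality to $\Lambda(z,\alpha)=\cos(pz)-(-1)^p\cos(p\alpha)$ and then sandwich $|\cos(pz)|$ between $\tfrac12(\re^{p|\Im z|}-1)$ and $\tfrac12(\re^{p|\Im z|}+1)$. The only cosmetic difference is that you obtain the bound on $|\cos(pz)|$ via the explicit identity $|\cos(pz)|^2=\cos^2(p\Re z)+\sinh^2(p\Im z)$, whereas the paper reads it directly off $\cos(pz)=\tfrac12(\re^{\ri pz}+\re^{-\ri pz})$ by considering the signs of $\Im z$.
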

	\begin{proof}
		By the triangle inequality and the definition \eqref{eq:Lambda},
		\begin{align*}
			|\Lambda(z,\alpha)|&\leq|\cos(pz)|+1{=}\left|\frac{\re^{\ri p z}+\re^{-\ri p z}}{2}\right| + 1.
		\end{align*}
		Similarly, by the negative triangle inequality,
		\begin{align*}
			|\Lambda(z,\alpha)|&\geq|\cos(pz)|-1{=}\left|\frac{\re^{\ri p z}+\re^{-\ri p z}}{2}\right| - 1.
		\end{align*}
		The assertion follows by considering the cases $\Im z$ positive and negative.
	\end{proof}
	
	We are now ready to prove the main result of this section.
	\begin{proof}[Proof of Theorem \ref{thm:semidiscrete_error}]

		\begin{figure}
			\centering	
			\begin{tikzpicture}[scale=0.9]
				\def\L{-4.5}
				\def\R{6}
				\draw[dashed] (\L,-1.5) -- (\R-.5,-1.5) -- (\R-.5,1.5) -- (\L,1.5) -- (\L,-1.5);
				\node[above left] at (\L,1.5) {$\gamma$};
				\filldraw (2,-0.05) rectangle ++(4pt,4pt);\node[above ] at (2,0) {$\theta_*$};
				\filldraw (-3,-0.05) rectangle ++(4pt,4pt);\node[above ] at (-3,0) {$\theta_*-\pi/p$};
				\filldraw (\R+1,-0.05) rectangle ++(4pt,4pt);\node[below] at (\R+1,0) {$\theta_*+\pi/p$};
				\draw (0,0) circle[radius=2pt, fill=white];\node[above] at (0,0) {$\theta_0$};
				\draw (4,0) circle[radius=2pt, fill=white];\node[above] at (4,0) {$\theta_0'$};
				\draw (-6,0) circle[radius=2pt];\node[above] at (-6,0) {$\theta_0'-2\pi/p$};
				{		\color{blue}\filldraw (-1,0) circle[radius=1.5pt];\node[above] at (-1,0) {$\theta$};\node[below] at (-1,0) {(case one)};
					\filldraw (1,0) circle[radius=1.5pt];\node[above] at (1,0) {$\theta$};\node[below] at (1,0) {(case two)};}
				\node[above left] at (-6,2) {$\C$};
				\draw [->,line width=.9pt](-5-1.5,0) -- (\R+3,0);\node[right] at (\R+2,0.35) {$\Re(z)$};
				\draw [<->,line width=.5pt](\R-0.45,0.1) -- (\R-0.45,1.5);\node[right] at (\R-0.45,0.75,0) {$=\frac{\ln(3+\pi^2/64)}{p}$};
				\draw [<->,line width=.5pt](-3,0.6) -- (0,0.6);\node[above] at(-1.5,0.6) {$\geq\frac{\pi}{2p}$};
				\draw [<->,line width=.5pt](-6,-0.6) -- (\L,-0.6);\node[below] at(-5.25,-0.6) {$\geq\frac{\pi}{4p}$};
				\draw [<->,line width=.5pt](\L,-0.6) -- (-3,-0.6);\node[below left] at(-3,-0.6) {$\geq\frac{\pi}{4p}$};
				\draw [<->,line width=.5pt](-3,-0.6) -- (2,-0.6);\node[below] at(-1,-0.6) {$=\frac{\pi}{p}$};
				
				\draw (\L,0) circle[radius=2pt, fill=black];\node[right] at(\L,-0.2) {$z_-$};
				\draw (\R-0.5,0) circle[radius=2pt, fill=black];\node[left] at(\R-0.5,-0.2) {$z_+$};
			\end{tikzpicture}
			\caption{Rectangular contour, covering both cases one and two of the proof of Theorem \ref{thm:semidiscrete_error}. The arrows are labelled with values (or bounds) of the length of the regions to which the arrows are parallel.}\label{fig:proofcases}
		\end{figure}
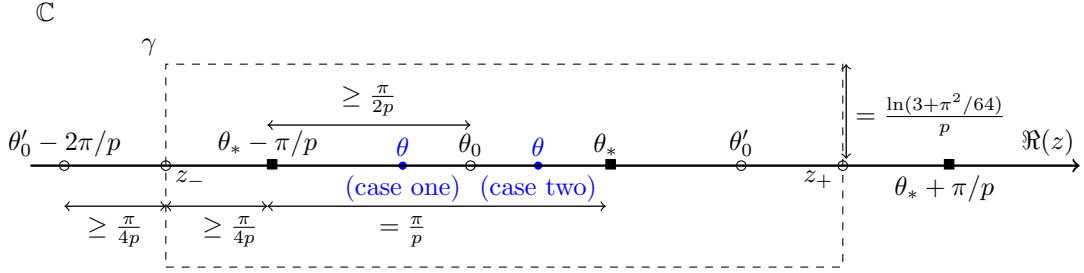
		\newcommand{\cM}{\mathcal{M}}
		We aim to continuously deform onto a rectangular complex contour $\gamma$ (as in Figure \ref{fig:proofcases}), such that we can bound below the denominator of~\eqref{eq:CauchyFF}, and~\eqref{eq:CauchyFFN}, uniformly on $\gamma$.  We denote this bound by $\cM(\gamma):=\min_{z\in\gamma}\{\Lambda(z,\alpha)(z-\theta)\}$.
		Using this bound, which will be derived shortly, together with Lemma~\ref{lem:lamupperbd}, taking the absolute value of the integrand and applying the \revise{Cauchy-Schwarz} inequality, we can obtain the following estimate:
		
		{\begin{equation}
				\hspace{-2pt}\left|\frac{1}{2\pi\ri}\oint_\gamma\frac{\sum_{m=1}^M b_m(\alpha)\Lambda(z,\alpha_m)[D(z,\alpha_m)-D_N(z,\alpha_m)]}{\Lambda(z,\alpha)(z-\theta)}\rd{z}\right|\leq
				\frac{|\gamma|(3+\re^{p\max|\Im\gamma|})}{4\pi\cM(\gamma)}
				\sqrt{\sum_{m=1}^M |b_m(\alpha)|^2\epsilon_m^2}.\label{eq:big_est}
		\end{equation}}
		It is sufficient to consider two cases, as depicted in Figure \ref{fig:proofcases}, the rest follow by symmetry.
		
		Case one is where $\theta\leq\theta_0\leq\theta_*$. By definition, $\theta_*$ is closer to $\theta_0$ than $\theta_*-\pi/p$ is to $\theta_0$, hence $|(\theta_*-\pi/p)-\theta_0|\geq\pi/(2p)$. As the elements of $\Theta_\alpha$ are symmetric about the elements of $\Theta_*$, we can then deduce that $|(\theta_*-\pi/p)-(\theta_0'-2\pi/p)|\geq\pi/(2p)$. If we choose $\gamma$ to bisect the real line at $z_-$, the point halfway between $(\theta_*-\pi/p)$ and $(\theta_0'-2\pi/p)$, then we have that $\mathrm{dist}(\Theta_\alpha,z_-)\geq \pi/(4p)$ and $\mathrm{dist}(\Theta_*,z_-)\geq \pi/(4p)$. Since $\theta\geq(\theta_*-\pi/p)$, we also have $\mathrm{dist}(z_-,\theta)\geq \pi/(4p)$. A similar construction holds in the region to the right, choosing $\gamma$ to intersect $\R$ at the point $z_+$, which is halfway between $\theta_0'$ and $\theta_*+\pi/p$. This is indicated by Figure \ref{fig:proofcases}, and the bound $\mathrm{dist}(z_+,\theta)\geq \pi/(4p)$ follows similar arguments.
		
		Case two is where $\theta_0\leq\theta\leq\theta_*$. If we use the same contour as \revise{in} case one, we can make the stronger statement $\mathrm{dist}(z_\pm,\theta)\geq 3\pi/(4p).$ By combining both cases, we have
		\begin{equation}\label{eq:zpmbds}
			\mathrm{dist}(\Theta_\alpha,z_\pm)\geq \pi/(4p),\quad \mathrm{dist}(\Theta_*,z_\pm)\geq \pi/(4p),\quad |\theta-z_\pm|\geq \pi/(4p).
		\end{equation}
		Combining the above statement with Lemma~\ref{lem:HatBound}, we obtain
		\begin{equation}\label{eq:maxintatzpm}
			\cM(\{z_-,z_+\})\geq \frac{p^2}{8}\left(\frac{\pi}{4p}\right)^3 = \frac{\pi^3}{512p}.
		\end{equation}
		Denote by $\gamma_v$ the union of the vertical components of the complex rectangle $\gamma$. By combining~\eqref{eq:maxintatzpm} with Lemma~\ref{lem:im_change}, we have $\cM(\gamma_v)\geq  {\pi^3}/({512p})$.
		
		We know that the integrand will oscillate along the horizontal components, which we denote by $\gamma_h$. We want to choose the length of $\gamma_v$ to be sufficiently large that {$\cM(\gamma_h)$ is bounded below, where $\gamma_h$ are the horizontal components of the rectangle $\gamma$}.
		From Lemma \ref{lem:lamupperbd} we have
		\[
		|\Lambda(\theta+\ri c,\alpha)|\geq \frac{\re^{|c|p}}{2} - \frac{3}{2},
		\]
		and it follows that if we choose $c:=\pm\ln(3+\pi^2/64)/p$, then
		\begin{equation}\label{eq:gammavbds}
			|\Lambda(\theta+\ri c,\alpha)|\geq\frac{\pi^2}{128},\quad\text{for }\theta\in\R.
		\end{equation}
		Thus we choose $\Im\gamma_h = \{-c,c\}$, where $c:=\pm\ln(3+\pi^2/64)/p$. Since $c\geq \pi/(4p)$, we have $\mathrm{dist}(\gamma_h,\theta)\geq \pi/(4p)$ and hence, from \eqref{eq:zpmbds}, $\mathrm{dist}(\gamma,\theta)\geq \pi/(4p)$. Combining this with \eqref{eq:gammavbds} gives
		\begin{equation}\label{eq:Mbound}
			\cM(\gamma)\geq\frac{\pi^3}{512p}.
		\end{equation}
		
		The final ingredient required in the bound~\eqref{eq:big_est} is a bound on the contour length $|\gamma|$. Recalling that the distance between the elements of $\Theta_*$ is $\pi/p$, and our choice of $z_\pm$, as stated above, we have $z_-\geq \theta_*-\pi/p-\pi/(2p)$, and $z_+\leq\theta_*+\pi/p$, so $|z_+-z_-|\leq5\pi/(4p)$. Thus we can bound the length of the rectangle as follows:
		\begin{equation}\label{eq:reclen}
			|\gamma|=2|z_+-z_-| + 4c \leq \frac{5\pi + 4\ln(3+\pi^2/64)}{4p}.
		\end{equation}
		Inserting \eqref{eq:Mbound} and \eqref{eq:reclen} into \eqref{eq:big_est} completes the proof.

		\texttt{}	%
	\end{proof}
	
	{We do not expect the rectangular choice of $\gamma$ to be optimal\revise{;} other choices of $\gamma$ may yield a smaller constant $C$ in Theorem \ref{thm:semidiscrete_error}.}
	
	\section{Computing the embedding coefficients}\label{sec:sampling}
	
	{We now focus on \ptwo. Intelligently choosing the canonical incident angles $\alpha_1,\ldots,\alpha_M$ such that Assumption \ref{ass:bexists} holds is a challenging problem. For standard bases of polynomials or trigonometric functions, \emph{unisolvence} theorems (see for e.g. \cite[\S2.4]{Da:75}) tell us that an $M$-dimensional function can be reconstructed uniquely from a set of $M$ distinct points. The problem \eqref{eq:sampling} may be interpreted similarly, as a reconstruction problem in the non-standard basis $\{\hat{D}(\cdot,\alpha_m)\}_{m=1}^M$; here we have no theoretical guarantees about reconstruction from $M$ distinct points. }Assuming $\bfb$ is unique and we can construct it, \eqref{eq:linconst} adds a further constraint that $\|\bfb\|_2$ cannot get too large (recall the discussion after Theorem \ref{thm:semidiscrete_error}). These difficulties are already significant before we consider that, in practice, we must work with an approximation to the matrix and right-hand side of \eqref{eq:sampling}.
	
	To address \ptwo, we consider two empirical approaches inspired by the philosophy of \emph{oversampling}. This approach has proven to be effective in cases with non-orthogonal bases which satisfy the \emph{frame} condition in \cite{AdHu:20}, and was observed to be effective for problems using high-frequency bases which \revise{do not satisfy the frame condition} in \cite{GiHeHuPa:20}. The idea here is to incorporate $\tilde{M}>M$ canonical far-field patterns into our algorithm, more than are strictly necessary according to \cite{Bi:06}. The larger we choose $\tilde{M}$, the more likely it is that there exists a subset of $\{\alpha_m\}_{m=1}^{\tilde{M}}$ which satisfies Assumption \ref{ass:bexists}. Likewise, we expect that if $\tilde{M}$ is large enough, there are many subsets which satisfy Assumption~\ref{ass:bexists}, and we are free to choose a subset which minimises $\|\bfb\|_2$. Thus, by introducing redundancy, we have more samples and basis functions available, reducing the risk of aliasing and the effects of ill-conditioning. Hence, we expect that this gives us a better chance of solving \ptwo.
	
	Suppose we use a \emph{sampling strategy}, which chooses $
	\{\alpha_m\}_{m\in\N}${ dense in }$\bbS$. 
	Then for $\tilde{M}>M$, we consider the \emph{oversampled system}
	\begin{align}\label{eq:oversampling}
		\tilde{A}\tilde\bfb=\tilde\bfd, \quad\text{where }\tilde{A}:=[\hat{D}(\alpha_m,\alpha_{m'})]_{m,m'=1}^{\tilde{M}},\quad\tilde\bfd=(-1)^{p+1}[\hat{D}(\alpha,\alpha_{m})]_{m=1}^{\tilde{M}}.
	\end{align}
	It is easy to see that for any $\tilde{M}>M$, we have $\mathrm{rank}(\tilde{A})\leq M$, {and we expect that there are multiple $\tilde{\bfb}$ which satisfy \eqref{eq:oversampling}.
		
		\revise{We have added redundancy to our problem, under the assumption that this increases the chance of finding a coefficient} vector which addresses \ptwo. We now consider two different strategies to select the \revise{coefficient} vector.}
	
	\subsection{\revise{Strategy One}}
	
	{For the first strategy, w}e assume that $\tilde{M}$ is chosen sufficiently large that there are multiple solutions $\tilde\bfb$ to \eqref{eq:oversampling}, each satisfying
	\begin{equation}\label{eq:exactos}
		\hat{D}(\cdot,\alpha)=\sum_{m=1}^{\tilde{M}}\tilde{b}_m(\alpha)\hat{D}(\cdot,\alpha).
	\end{equation}
	When multiple solutions exist to \eqref{eq:oversampling}, the system is under-determined, and Algorithm~\ref{alg:SVD} (see, e.g., \cite{GoLo:13}) with $\delta=0$ provides a pseudo-inverse which will find the solution with minimal norm - this is consistent with our aim to address \ptwo.
	
	\begin{algorithm}
		\caption{Pseudo-inverse via truncated singular value decomposition}\label{alg:SVD}
		\begin{algorithmic}[1]
			\STATE{\textbf{Inputs:} $X\in\C^{\tilde{M},\tilde{M}}$, $\delta>0$}
				\STATE{Compute the singular value decomposition,
					\[
					X = U\Sigma V^*
					\]}
				\STATE{Denote by $\sigma_m$ the $m$th entry of $\Sigma$. Define $\Sigma^\dagger$ as the diagonal matrix with entries
					\[
					\sigma_m^\dagger\gets\left\{\begin{array}{ll}
						1/\sigma_m&\text{if }\sigma_m>\delta,\\
						0&\text{if }\sigma_m\leq\delta
					\end{array}
					\right.,\quad m=1,\ldots,\tilde{M}
					\]}
				\RETURN{Return pseudo-inverse $X^{\dagger}\gets V\Sigma^\dagger U^*$}
		\end{algorithmic}
	\end{algorithm}
	
	However, we must consider the added implications of working with numerical approximations. Hence, we define another problem:
	\begin{equation}\label{eq:approxoversampling}
		\tilde{A}_N\tilde\bfb_N\approx\tilde\bfd_N, \quad\text{where }\tilde{A}_N:=[\hat{D}_N(\alpha_m,\alpha_{m'})]_{m,m'=1}^{\tilde{M}}\approx\tilde{A},\quad\tilde{\bfd}_N=(-1)^{p+1}[\hat{D}_N(\alpha,\alpha_{m})]_{m=1}^{\tilde{M}}\approx\tilde{\bfd}.
	\end{equation}
	To determine $\tilde{\bfb}_N$, we will use Algorithm \ref{alg:SVD}. The following lemma, simply an application of \cite[Lemma~3.3]{CoHuMaWe:20}, describes how the error is balanced between the residual and \revise{coefficient} norms.
	
	\begin{lemma}\label{lem:svdbd}
		Suppose that the pseudo-inverse $\tilde{A}_N^{\dagger}$ is obtained using Algorithm \ref{alg:SVD} with matrix $\tilde{A}_N$ and threshold $\delta$. Then
		\begin{equation}\label{eq:tbN_def}
			\tilde{\bfb}_{N}:=\tilde{A}_N^\dagger\tilde{\bfd}_N,
		\end{equation}
		satisfies
		\[
		\|\tilde{\bfd}_{N}-\tilde{A}_N\tilde{\bfb}_N\|_2
		\leq\inf_{\bfv\in\C^{\tilde{M}}}
		\left\{\|\tilde\bfd_N-\tilde{A}_N\bfv\|_2 + \delta\|\bfv\|_2 \right\}.
		\]
	\end{lemma}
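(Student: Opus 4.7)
The plan is to carry out the standard truncated-SVD error analysis, exploiting the specific block structure imposed by Algorithm \ref{alg:SVD}. First I would fix the SVD $\tilde{A}_N = U\Sigma V^*$ from the algorithm and study the residual operator $P := I - \tilde{A}_N \tilde{A}_N^{\dagger}$. By unitary invariance, $P = U(I - \Sigma\Sigma^{\dagger})U^{*}$, where $\Sigma\Sigma^{\dagger}$ is the diagonal matrix whose $m$-th entry equals $1$ when $\sigma_m > \delta$ and $0$ otherwise. Thus $P$ is an orthogonal projection, giving $\|P\|_2 = 1$. Likewise, $P\tilde{A}_N = U(I-\Sigma\Sigma^{\dagger})\Sigma V^{*}$, and the diagonal $(I-\Sigma\Sigma^{\dagger})\Sigma$ has entries either $0$ (for $\sigma_m > \delta$) or $\sigma_m \leq \delta$, so $\|P\tilde{A}_N\|_2 \leq \delta$. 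These two operator norm bounds are the entire content of the lemma.

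Next, for arbitrary $\bfv \in \C^{\tilde{M}}$ I would insert a zero and split the residual as
\[
\tilde{\bfd}_N - \tilde{A}_N \tilde{\bfb}_N \;=\; P\tilde{\bfd}_N \;=\; P\bigl(\tilde{\bfd}_N - \tilde{A}_N\bfv\bigr) + P\tilde{A}_N\bfv.
\]
Applying the triangle inequality together with the two operator bounds above gives
\[
\|\tilde{\bfd}_N - \tilde{A}_N \tilde{\bfb}_N\|_2 \;\leq\; \|\tilde{\bfd}_N - \tilde{A}_N \bfv\|_2 + \delta\|\bfv\|_2.
\]
Since $\bfv$ was arbitrary, taking the infimum over $\bfv \in \C^{\tilde{M}}$ yields the claimed bound.

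There is no genuine obstacle here; the only step that requires care is the correct identification of the diagonal entries of $(I-\Sigma\Sigma^{\dagger})\Sigma$, which follows directly from the definition of $\sigma_m^{\dagger}$ in Algorithm \ref{alg:SVD}. Everything else is a one-line application of the triangle inequality and the unitary invariance of the spectral norm, which is why the result can be quoted verbatim from \cite[Lemma~3.3]{CoHuMaWe:20}.
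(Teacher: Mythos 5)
Your proof is correct and self-contained. The paper actually does not give a proof of this lemma at all: it simply invokes \cite[Lemma~3.3]{CoHuMaWe:20}, so there is no internal argument to compare against. What you have written is precisely the standard truncated-SVD residual argument one would expect to find behind that citation: the residual operator $P = I - \tilde{A}_N\tilde{A}_N^\dagger = U(I-\Sigma\Sigma^\dagger)U^*$ is an orthogonal projector, and $P\tilde{A}_N = U(I-\Sigma\Sigma^\dagger)\Sigma V^*$ has spectral norm at most $\delta$ by the truncation rule in Algorithm~\ref{alg:SVD}; splitting $P\tilde{\bfd}_N = P(\tilde{\bfd}_N - \tilde{A}_N\bfv) + P\tilde{A}_N\bfv$ and applying the triangle inequality then delivers the bound. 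One tiny imprecision: you assert $\|P\|_2 = 1$, but if every singular value exceeds $\delta$ then $P = 0$ and $\|P\|_2 = 0$. The argument only uses $\|P\|_2 \le 1$, which holds for any orthogonal projector, so this does not affect the validity of the proof, but the equality claim should be softened to an inequality.
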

	
	Lemma~\ref{lem:svdbd} informs us how to choose \revise{$\delta$} so as to balance the residual error with the \revise{coefficient} norm. A small choice of $\delta$ means that the residual error is relatively small, whereas a large choice of $\delta$ means that the \revise{coefficient} norm is relatively small. A balance of both is desirable. When balancing these two, we first consider that for fixed $N$, even if $\|\tilde{\bfd}_{N}-\tilde{A}_N\tilde{\bfb}_N\|_2\to0$ for $\delta\to0$, this does not imply $\|\tilde{\bfb}_{N}-\tilde{\bfb}\|_2\to0$, for any $\tilde{\bfb}$ solving \eqref{eq:exactos}. This is simply due to the problems \eqref{eq:oversampling} and \eqref{eq:approxoversampling} having different solutions. Obviously, as $N\to\infty$, we expect $\|\tilde{\bfb}_{N}-\tilde{\bfb}\|_2$ to be small, but it can only be made \emph{so small} by decreasing $\delta$. There will exist a threshold as $\delta\to0$, beyond which there is no practical advantage in decreasing $\delta$ any further. Moreover, choosing $\delta$ too small is a disadvantage, because the norm $\|\tilde{\bfb}_{N}\|_2$ is permitted to become very large as $\delta\to0$, which is inconsistent with our aim to address \ptwo. This threshold will depend on the accuracy of $D\approx D_N$, but can be estimated using the following result.
	
	\begin{lemma}\label{lem:resbound}
		For $\tilde{A}_N$ and $\tilde\bfd_N$ of the approximate system \eqref{eq:approxoversampling}, $\tilde{\bfb}$ of the exact system \eqref{eq:oversampling} and $\|\bfepsilon\|_2$ as in Theorem \ref{thm:semidiscrete_error},
		\[
		\inf_{\bfv\in\C^{\tilde{M}}}
		\left\{\|\tilde\bfd_N-\tilde{A}_N{\bfv}_N\|_2 \right\}
		\leq \|\tilde\bfd_N-\tilde{A}_N\tilde{\bfb}\|_2\leq
		2(\tilde{M}\|\tilde{\bfb}\|_2+1)\|\bfepsilon\|_2.
		\]
	\end{lemma}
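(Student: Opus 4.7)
The plan is to handle the two inequalities separately. The left inequality is trivial: $\tilde{\bfb}\in\C^{\tilde M}$ is a particular element over which the infimum is taken, so
\[
\inf_{\bfv\in\C^{\tilde M}}\|\tilde\bfd_N-\tilde A_N\bfv\|_2 \;\le\; \|\tilde\bfd_N-\tilde A_N\tilde\bfb\|_2.
\]
Thus all the work lies in proving the right inequality.

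For that, I would exploit the fact that $\tilde\bfb$ satisfies the exact oversampled system $\tilde A\tilde\bfb=\tilde\bfd$ and perform the standard perturbation split
\[
\tilde\bfd_N-\tilde A_N\tilde\bfb = (\tilde\bfd_N-\tilde\bfd) + (\tilde A-\tilde A_N)\tilde\bfb,
\]
then apply the triangle inequality and bound each term. In both cases the pointwise error carries a factor $|\Lambda|$ coming from the definition $\hat D=\Lambda D$ and $\hat D_N=\Lambda D_N$ in \eqref{eq:hattyboomboom}. Since every argument at which we evaluate $\Lambda$ here is real (a canonical angle or $\alpha$ itself), the upper bound in Lemma~\ref{lem:lamupperbd} with $\Im z=0$ gives $|\Lambda(\cdot,\cdot)|\le 2$. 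Combined with the hypothesis $|D(z,\alpha_m)-D_N(z,\alpha_m)|\le\epsilon_m$ from Theorem~\ref{thm:semidiscrete_error} (understood now to range over $m=1,\dots,\tilde M$), this yields $|\hat D-\hat D_N|\le 2\epsilon_m$ in both the $m$th entry of $\tilde\bfd_N-\tilde\bfd$ and in the $(m,m')$ entry of $\tilde A-\tilde A_N$, where the relevant error index is the one attached to the \emph{second} (canonical incident) argument.

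Summing squared entries then gives $\|\tilde\bfd_N-\tilde\bfd\|_2\le 2\|\bfepsilon\|_2$ directly. For the matrix perturbation, I would bound entrywise and apply Cauchy--Schwarz twice: first within each row to get
\[
\bigl|((\tilde A-\tilde A_N)\tilde\bfb)_m\bigr|\;\le\;2\sum_{m'=1}^{\tilde M}\epsilon_{m'}|\tilde b_{m'}|\;\le\;2\|\bfepsilon\|_2\|\tilde\bfb\|_2,
\]
then summing $m=1,\dots,\tilde M$ to obtain $\|(\tilde A-\tilde A_N)\tilde\bfb\|_2\le 2\sqrt{\tilde M}\,\|\bfepsilon\|_2\|\tilde\bfb\|_2$. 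Bounding $\sqrt{\tilde M}\le \tilde M$ crudely and combining the two pieces gives the claim $2(\tilde M\|\tilde\bfb\|_2+1)\|\bfepsilon\|_2$.

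The main obstacle is really just bookkeeping rather than mathematical depth: one has to track carefully which subscript of $\epsilon$ pairs with which argument of $\hat D-\hat D_N$ (the \emph{right-hand} argument, i.e.\ the canonical incident angle), and one has to recognise that the definition of $\|\bfepsilon\|_2$ must be (implicitly) extended from $M$ to $\tilde M$ components in the oversampled setting. The crude $\sqrt{\tilde M}\mapsto\tilde M$ step is what produces the slightly pessimistic $\tilde M$ in the final bound; a sharper bound using the Frobenius or spectral norm of $\tilde A-\tilde A_N$ would improve this to $\sqrt{\tilde M}$, but is not needed for the statement as given.
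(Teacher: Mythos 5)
Your proof is correct and follows essentially the same route as the paper's: split $\tilde\bfd_N-\tilde A_N\tilde\bfb$ into the perturbations $\tilde\bfd_N-\tilde\bfd$ and $(\tilde A-\tilde A_N)\tilde\bfb$ using the exact relation $\tilde A\tilde\bfb=\tilde\bfd$, apply the triangle inequality, and bound each piece using $|\Lambda|\le 2$ and the entrywise $\epsilon_m$ bounds. The only cosmetic difference is that the paper invokes the Frobenius-norm submultiplicativity $\|X\bfv\|_2\le\|X\|_F\|\bfv\|_2$ where you spell out the row-by-row Cauchy--Schwarz argument that proves that same inequality; the resulting $\sqrt{\tilde M}$ and the crude step to $\tilde M$ are identical in spirit.
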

	\begin{proof}
		The infimum is taken over all $\bfv\in\C^{\tilde{M}}$, we proceed by choosing $\bfv=\tilde{\bfb}$. We have
		\begin{align*}
			\tilde{\bfd}_N-\tilde{A}_N\tilde{\bfb} &= \tilde{\bfd}_N + (\tilde{\bfd}-\tilde{\bfd})-\left[\tilde{A}_N+(\tilde{A}-\tilde{A})\right]\tilde{\bfb} \\
			&=\left[\tilde{\bfd}_N-\tilde{\bfd} - (\tilde{A}_N-\tilde{A})\tilde{\bfb} \right]
			+\left[\tilde{\bfd}-\tilde{A}\tilde{\bfb}\right]\\
			&=\tilde{\bfd}_N-\tilde{\bfd} - (\tilde{A}_N-\tilde{A})\tilde{\bfb},
		\end{align*}
		by \eqref{eq:oversampling}.  Elementary norm manipulation then gives
		\[
		\|\tilde{\bfd}_N-\tilde{A}_N\tilde{\bfb}\|_2\leq\|\tilde{\bfd}_N-\tilde{\bfd}\|_2 + \|\tilde{A}_N-\tilde{A}\|_F\|\tilde{\bfb}\|_2,
		\]
		where $\|\cdot\|_F$ denotes the Frobenius norm. The assertion follows by bounding each entry of $\tilde{\bfd}_N-\tilde{\bfd}$ and $\tilde{A}_N-\tilde{A}$, in terms of $\epsilon_m$ for $m=1,\ldots,\tilde{M}$. The factor of two follows because $|\Lambda|\leq2$.
	\end{proof}
	In some sense, $\tilde\bfb$ is the perfect approximation to the solution of \eqref{eq:tbN_def}, because there will be no error in our embedding coefficients. Lemma \ref{lem:resbound} quantifies that even with this \emph{perfect solution}, there will still be a residual error; therefore, we should not waste {too much} effort minimising the residual, especially if this comes at the cost of $\|\tilde{\bfb}_N\|_2$ growing. Assuming that $\|\tilde{\bfb}_N\|_2\approx\|\tilde{\bfb}\|_2$, Lemmas \ref{lem:svdbd} and \ref{lem:resbound} suggest that to balance the residual error with the \revise{coefficient} norm, a sensible choice is
	\begin{equation}\label{eq:deltaguess}
		\delta \geq\tilde{M}\|\bfepsilon\|_2.
	\end{equation}
	This will minimise the \revise{coefficient} norm $\|\bfb\|_2$, subject to the constraint that the residual error is no smaller than necessary. In \S\ref{sec:osexps}, we experiment with different $\tilde{M}$ and $\delta$, providing solid numerical evidence that this approach addresses \ptwo.
	
	\subsection{Strategy Two}\label{sec:strategy2}
	
	In Strategy One, we addressed {\ptwo} by increasing the number of canonical far-field patterns in the embedding formula from $M$ to $\tilde{M}$, minimising the least-squares error. Strategy Two considers $\tilde{M}$ canonical far-field patterns only as a preprocessing step, before restricting to a subset of $M$ indices $\calI\subset\{1,\ldots,\tilde{M}\}$ which are in some sense optimal. Then in the embedding formula, we use the canonical incident angles $\{\alpha_m\}_{m\in\calI}$, thus choosing $A$ (of \eqref{eq:sampling}) as a square submatrix of $\tilde{A}$, keeping only the rows and columns in the index set $\calI$. \revise{Strategy Two} aims to choose this submatrix $A$ such that {\ptwo} is addressed.
	
	To achieve this, we use Algorithm \ref{alg:greedy}, initially proposed in \cite{BuGo:65}. This greedy algorithm is designed for problems which require a subset of matrix columns which maximises volume and equivalently (see \cite{HoPa:92}), minimises condition number. Although these are NP-hard optimisation problems, this algorithm achieves near-optimal results \cite{CiMa:09}.
	
	\begin{algorithm}
		\caption{Column subset selection}\label{alg:greedy}
		\begin{algorithmic}[1]
			\STATE{\textbf{Inputs:} $\tilde{A}=[\mathbf{a}_1|\ldots|\mathbf{a}_{\tilde{M}}]\in\C^{\tilde{M}\times\tilde{M}}$, $M<\tilde{M}$}
			\STATE{Define an empty array }$\calI\gets\{\}$
			\WHILE{$\calI$ contains fewer than $M$ elements}
			\STATE{Assign \[m^*\gets\argmax_{m=1,\ldots,\tilde{M}}\{|\mathbf{a}_m|\}\]}
			\STATE{Update $\calI\gets\calI\cup m^*$}
			\STATE{Update each vector}\[
			\mathbf{a}_m\gets\mathbf{a}_m - \mathbf{a}_{m^*}\frac{\left<\mathbf{a}_m,\mathbf{a}_{m^*}\right>}{\left<\mathbf{a}_{m^*},\mathbf{a}_{m^*}\right>}, \quad{m=1,\ldots,\tilde{M}}
			\]
			\ENDWHILE
			\RETURN{$\calI$}
			
		\end{algorithmic}
	\end{algorithm}
	
	Informally, at each iteration of the \textbf{while} loop, Algorithm \ref{alg:greedy} chooses the column vector which is (in some sense) the most orthogonal to the columns which have already been logged in the array $\calI$, via the same computation used in Gram-Schmidt orthogonalisation. 
	
	To the best knowledge of the authors, current theoretical results for Algorithm \ref{alg:greedy} apply to \revise{column subset selection, whereas our application requires us to choose a subset of columns and rows}. Despite the lack of available theory, our numerical experiments of \S\ref{sec:numerics} suggest that Strategy Two is highly effective in practice, provided $\tilde{M}$ is chosen sufficiently large; our experiments suggest that $\tilde{M}=\lceil 3M/2\rceil$ is typically more than sufficient. In terms of accuracy, we observe that it outperforms Strategy One at high frequencies. Moreover, considering that most CPU time is spent on far-field evaluations $D_N(\theta,\alpha)$, Strategy Two is more computationally efficient, by a factor of roughly $M/\tilde{M}$.

	\section{\revise{The algorithm}}\label{sec:mainalg}
	
	\begin{algorithm}
		\caption{Main routine}\label{alg:main}
		\begin{algorithmic}[1]
			\STATE\textbf{Inputs: $\theta,\alpha, \{D_N(\cdot,\alpha_m)\}_{m=1}^{\tilde{M}}, \{\frac{\partial}{\partial\theta}D_N(\cdot,\alpha_m)\}_{m=1}^{\tilde{M}}, \{\frac{\partial^2}{\partial\theta^2}D_N(\cdot,\alpha_m)\}_{m=1}^{\tilde{M}}, p$, $\texttt{Strategy}\in\{1,2\}$}
			\STATE\textbf{Adjustable parameters: $\delta=10^{-8},H=0.1,h=10^{-3}$}
			\STATE{\textbf{First pre-computation step:} Construct $\{\hat{D}_N(\cdot,\alpha_m)\}_{m=1}^{\tilde{M}}$ (and the first and second derivatives) using \eqref{eq:hattyboomboom}.}
			{\STATE{\textbf{Second pre-computation step - determining \revise{coefficient} vector.}
					Note that this step does not need to be repeated for future values of $(\theta,\alpha)\in\T$:
					\IF{$\texttt{Strategy}=1$}
					\STATE{Construct $\tilde{A}_{N}$ using \eqref{eq:approxoversampling}, then construct and store $\tilde{A}_{N}^\dagger$ using Algorithm \ref{alg:SVD} with inputs $\tilde{A}_{N}$ and $\delta$.}
					\ELSIF{$\texttt{Strategy}=2$}
					\STATE{Construct a submatrix $A_N$ using Algorithm \ref{alg:greedy} with inputs $\tilde{A}_N$ and $M$. Store $A_N^{-1}$.}
					\ENDIF}
				\STATE{Construct $\tilde\bfd_{N}$ using \eqref{eq:approxoversampling} and hence the \revise{coefficient} vector $\tilde\bfb_N$}}
			
			\IF{$|\theta-\theta_0|_{2\pi} > h$}
			\STATE Assign the naive approximation \eqref{eq:Dnaive}: $I\gets\Dnaive(\theta,\alpha)$.
			\IF{$|\theta-\theta_0|_{2\pi} \in[h,H)$}
			\STATE{A correction will be necessary - assign}
			\[
			I_{\mathrm{correction}}\gets\left\{\begin{array}{ll}
				\displaystyle\frac{1}{2\pi\ri}\oint_{\cR(\{\theta_0,\theta_0'\},h)}\frac{\rho_2(z)}{\Lambda(z,\alpha)(z-\theta)}\rd{z}		,\quad& |\theta_0-\theta_0'|_{2\pi}<h\\
				\displaystyle\sum_{\chi \in\{\theta_0,\theta_0'\}}\frac{ \sum_{m=1}^M  b_m(\alpha)\hat{D}_N(\chi,\alpha_m)}{p(\chi-\theta)\sin(p\chi)},\quad& |\theta_0-\theta_0'|_{2\pi}\in[h,H)\\
				\displaystyle\frac{ \sum_{m=1}^M b_m(\alpha)\hat{D}_N(\theta_0,\alpha_m)}{p(\theta_0-\theta)\sin(p\theta_0)},\quad& |\theta_0-\theta_0'|_{2\pi}>H\\
			\end{array}\right.
			\]
			\STATE{Update $I\gets I+I_{\mathrm{correction}}$}
			\ENDIF
			\ELSIF{$\theta=\theta_0=\theta_*$}
			\STATE{{Apply  L'H\^opital's rule (twice)}:
				\[
				I \gets \frac{\sum_{m=1}^{\tilde{M}}\tilde{b}_m(\alpha)\frac{\partial^2\hat{D}_N(\theta_0,\alpha_m)}{\partial\theta^2}}{-p^2\cos(p\theta)}
				\]}
			\ELSE \STATE{Risk of rounding errors, use polynomial interpolation at the poles.}
			\STATE{Construct $\rho_2$, interpolating as described in \S\ref{sec:FPA}.}
			\[
			I\gets\left\{\begin{array}{ll}
				\displaystyle\frac{1}{2\pi\ri}\oint_{\cR(\{\theta,\theta_0,\theta_0'\},h)}\frac{\rho_2(z)}{\Lambda(z,\alpha)(z-\theta)}\rd{z}	,\quad& |\theta_0-\theta_0'|_{2\pi}<h\\
				\displaystyle\frac{1}{2\pi\ri}\oint_{\cR(\{\theta,\theta_0\},h)}\frac{\rho_2(z)}{\Lambda(z,\alpha)(z-\theta)}\rd{z} + \displaystyle\frac{ \sum_{m=1}^M b_m(\alpha)\hat{D}_N(\theta_0',\alpha_m)}{p(\theta_0'-\theta)\sin(p\theta_0')},\quad& |\theta_0-\theta_0'|_{2\pi}\in[h,H)\\
				\displaystyle\frac{1}{2\pi\ri}\oint_{\cR(\{\theta,\theta_0\},h)}\frac{\rho_2(z)}{\Lambda(z,\alpha)(z-\theta)}\rd{z},\quad& |\theta_0-\theta_0'|_{2\pi}>H\\
			\end{array}\right.
			\]
			\ENDIF
			\STATE \textbf{Output:} $I$
		\end{algorithmic}
	\end{algorithm}
	
	The algorithm {we use for the numerical experiments in \S\ref{sec:numerics} }is based upon the ideas presented in \S\ref{sec:intro}--\S\ref{sec:sampling}, with some modifications in order to minimise unnecessary \revise{flops (floating point operations)}. It can be seen from Figure \ref{fig:illcond} that for some values of $\theta$ the naive approximation \eqref{eq:Dnaive} is sufficient. With this in mind, we introduce a threshold $H>0$, such that if $|\theta-\theta_0|_{2\pi}<H$, then we consider it necessary to correct the naive approximation in some way, otherwise we simply use~$\eqref{eq:Dnaive}$. When a correction is considered necessary, all of the relevant integrals and sums are based on equations \eqref{eq:res} and \eqref{eq:Cauchyrho3}, with the following exception:  the residue $\theta_0'$ is excluded from the sum \eqref{eq:res} when $|\theta_0-\theta_0'|_{2\pi}\geq H$, as it is considered to have a negligible contribution.
	
	Similarly, in the vast majority of cases, there will be no issues with rounding errors, and we introduce a second threshold $h<H$, such that if $|\theta_0-\theta_0'|_{2\pi}<h$, we use the interpolation approach described in \S\ref{sec:FPA}. For these contour integrals, we use a rectangular contour $\cR(X,h)$, where $X$ is the set of poles which the integral encloses, chosen so that $\cR(X,h)$ is the smallest rectangle with $X$ in its interior, satisfying $\dist(X,\cR(X,h))=h$. These contour integrals are evaluated using a $20$-point Gaussian quadrature rule along each edge.
	
	Algorithm \ref{alg:main} summarises the key steps of our implementation, excluding details such as quadrature (discussed above), for brevity. Our algorithm has been implemented in Matlab and is available at \cite{git:REEF}. This implementation is intended to be a proof of \revise{concept --- developing} a streamlined software package is reserved for future work.
	
	A key input to Algorithm \ref{alg:main} is $\{D_N(\cdot,\alpha_m)\}_{m=1}^{\tilde{M}}$, which must be obtained by some numerical method. We use two different numerical methods (outlined below) to broaden the range of the following experiments. In both methods, we reformulate the problem \eqref{Helm}-\eqref{HelmSRC} as the standard first kind boundary integral equation, discretise using a boundary element method, approximating $\partial u/\partial n$ on the boundary of $\Omega$, solving using oversampled collocation, as described in \cite{GiHeHuPa:20}. The code used for both solvers is available at \cite{git:HNABEMLAB}.
	
	\subsection{Standard $hp$ Boundary Element Method}\label{sec:stdbem}
	
	The first solver is a standard $hp$ boundary element method (BEM, see, e.g., \cite{SaSc:11}). We use a piecewise polynomial approximation space, defined on a graded mesh. The mesh is chosen so that the largest mesh element is no more than $\pi/k$ (half a wavelength) long. Towards the corners of the polygon, we use a geometric grading with grading parameter $0.15$, and $2P$ layers, where $P$ is the maximal polynomial degree, reducing the polynomial degree linearly towards corners, as described in, e.g., \cite{GiChLaMo:21}.
	
	It is well known that standard BEMs such as this need to increase degrees of freedom like $O(k)$ to maintain accuracy as $k$ increases (see, e.g., \cite{SaSc:11}). Therefore, we use this method in examples with moderate to low $k$, where $\Omega$ is a polygon.
	
	\subsection{Hybrid Numerical-Asymptotic Boundary Element Method (HNA BEM)}\label{sec:hnabem}
	
	This method is a non-standard $hp$-BEM, where the basis consists of piecewise polynomials multiplied by $k$-dependent oscillatory functions; details are given in \cite{GiHeHuPa:20}. The mesh is graded towards the corners of the obstacle in the same way as for the standard BEM described above. The key difference here is that the mesh width is not $k$-dependent because the oscillations are resolved by the basis functions \cite{HeLaCh:15}. The numerical implementation of \cite{GiHeHuPa:20} is available at \cite{git:HNABEMLAB}, and {enjoys $k$-independent cost }for screen problems. Using this solver, we are able to experiment for extremely large $k$ for the case where $\Omega$ is a screen.
	
	\subsection{Default parameters}\label{sec:params}
	Through extensive experimentation, the following parameters were found to provide a good balance between efficiency and accuracy:
	\begin{itemize}
		\item For the thresholds introduced above, we choose $H=0.15$ and $h=0.01$.
		\item We oversample with $\tilde{M}=\lceil3M/2\rceil$.
		\item We use Strategy Two, unless explicitly stated otherwise.
		\item We choose $\tilde{M}$ canonical incident angles equispaced on $\bbS$, with $\alpha_1=0$.
		\item When evaluating the contour integrals over the rectangular contours $\cR(X,h)$, we use a $20$-point Gaussian quadrature rule along each edge of the rectangle.
		\item If Strategy One is chosen instead of the default Strategy Two, we \revise{use} $\delta=10^{-8}$.
	\end{itemize}
	Unless mentioned otherwise in the following experiments, it can be assumed that these parameters have been used. It appears that smaller values of $H$ are sufficient when $k$ is large, potentially reducing unnecessary \revise{flops}. We do not investigate this link here, and use a fixed value for all $k$.

	\section{Numerical examples and experiments}\label{sec:numerics}
	
	In this section, we present numerical experiments demonstrating the effectiveness \revise{of Algorithm \ref{alg:main}}.
	
	\subsection{Example applications of Algorithm \ref{alg:main}}\label{sec:basicegs}
	
	The first experiment we present compares the far-field pattern produced by the naive embedding approximation \eqref{eq:Dnaive} and Algorithm \ref{alg:main}.
	\subsubsection*{Right-angled isosceles triangle, $k=10$}
	We consider the far-field induced by a plane wave with angle $\alpha=5\pi/4$, $k=10$, and $\Omega$ the right-angled {isosceles} triangle with vertices $\bfP_1=(0,0)$, $\bfP_2=(0,1)$ and $\bfP_1=(1,0)$. By Definition \ref{def:RationalPolygons} we have $q_1=6,q_2=q_3=7,p=4$ and $M=17$. {We use the default parameters of \S\ref{sec:params} and} compute $D_N$ using the standard {$hp$-}BEM solver of \S\ref{sec:stdbem}, with a reference solution of $N_{\mathrm{ref}}=600$. Figure~\ref{fig:ratriglowup} shows the errors for both approximations for a range of $N$. The naive approximation is shown as a dotted line, and the output of Algorithm \ref{alg:main} is shown as a thick line. {The relative error is measured as
		\begin{equation}\label{eq:relerr1}
			\frac{\|\cD_N(\cdot,\alpha)-D_{N_{\mathrm{ref}}}(\cdot,\alpha)\|_{L^\infty(\bbS)}}{\|D_{N_{\mathrm{ref}}}(\cdot,\alpha)\|_{L^\infty(\bbS)}},
		\end{equation}
		where each norm is approximated with $1000$ equispaced points. This approximation} will \revise{converge exponentially} {as the number of quadrature points increases,} because the far-field is periodic {and \revise{entire}}, see, e.g., \cite{TreWe:14}.
	\begin{figure}
		\centering
		\includegraphics[width=0.7\linewidth]{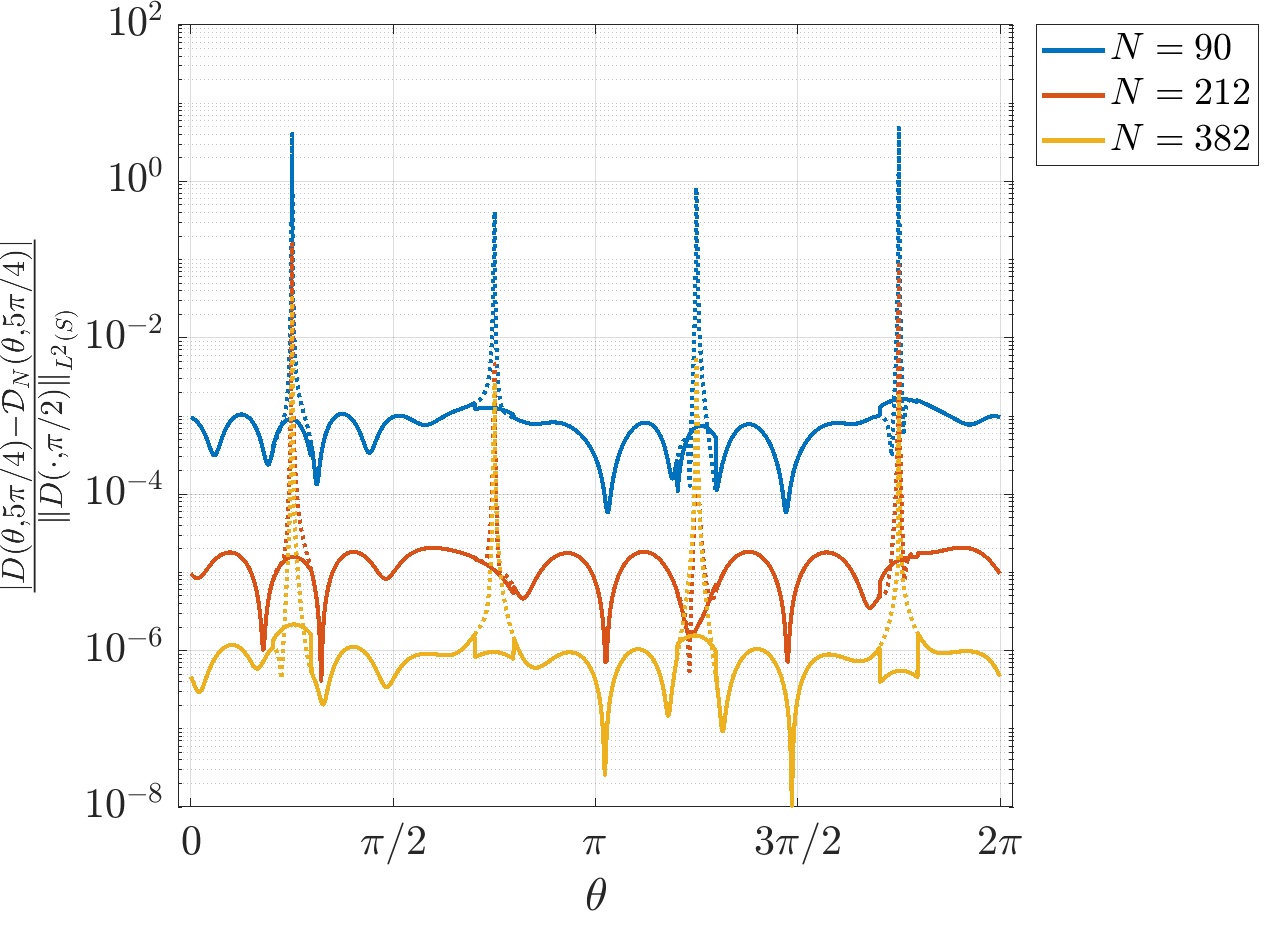}
		\caption{The errors in the naive approximation \eqref{eq:Dnaive} (dotted) and the output of Algorithm \ref{alg:main} (solid), for the right-angled triangle with $k=10$ and $\alpha=5\pi/4$. In all cases, the spikes of the naive approximation do not appear using our method.}
		\label{fig:ratriglowup}
	\end{figure}
	The unbounded error of the naive approach is clearly visible and remedied by our new approach, as predicted by Theorem~\ref{thm:semidiscrete_error}. {Discontinuous jumps in the error of our method are visible. These could be smoothed out by choosing a larger value of $H$, although it is not necessary to achieve a uniformly low error.}
	
	{Our method is most powerful when many incident angles are considered. Hence, for the next experiment, we consider one thousand equispaced $\alpha\in\bbS$. Figure \ref{fig:raerrsk10p4} shows $\Re[\cD_{212}]$ over $\T$ and the pointwise relative error, estimated using
		\[
		\frac{|\cD_N(\theta,\alpha)-D_{N_{\mathrm{ref}}}(\theta,\alpha)|}{|D_{N_{\mathrm{ref}}}(\theta,\alpha)|}.
		\]
		The relative error {is fairly evenly distributed, peaking at} around $10^{-4}${, with no visible spikes.}
		\begin{figure}
			\centering
			\includegraphics[width=0.47\linewidth]{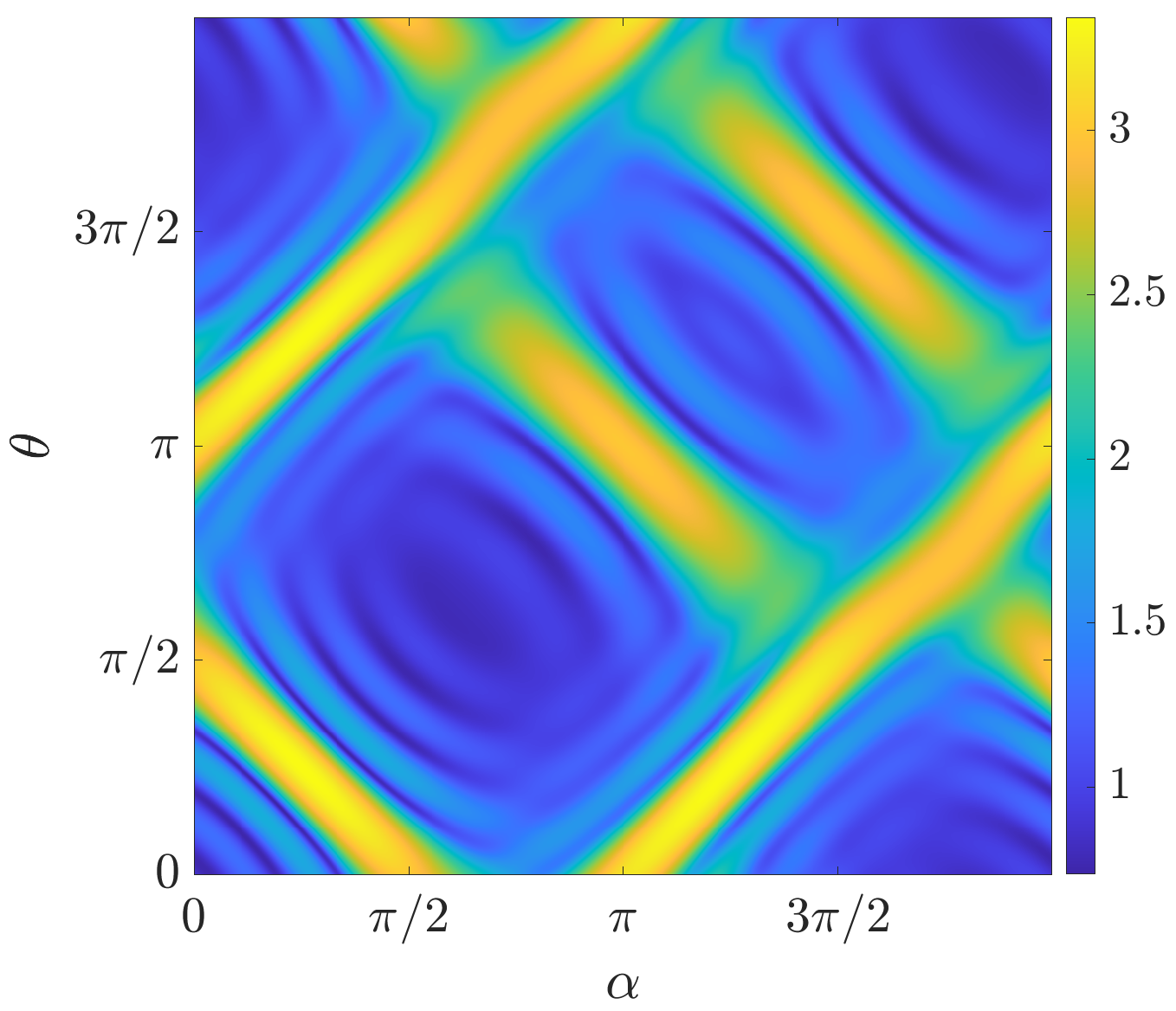}
			\includegraphics[width=0.48\linewidth]{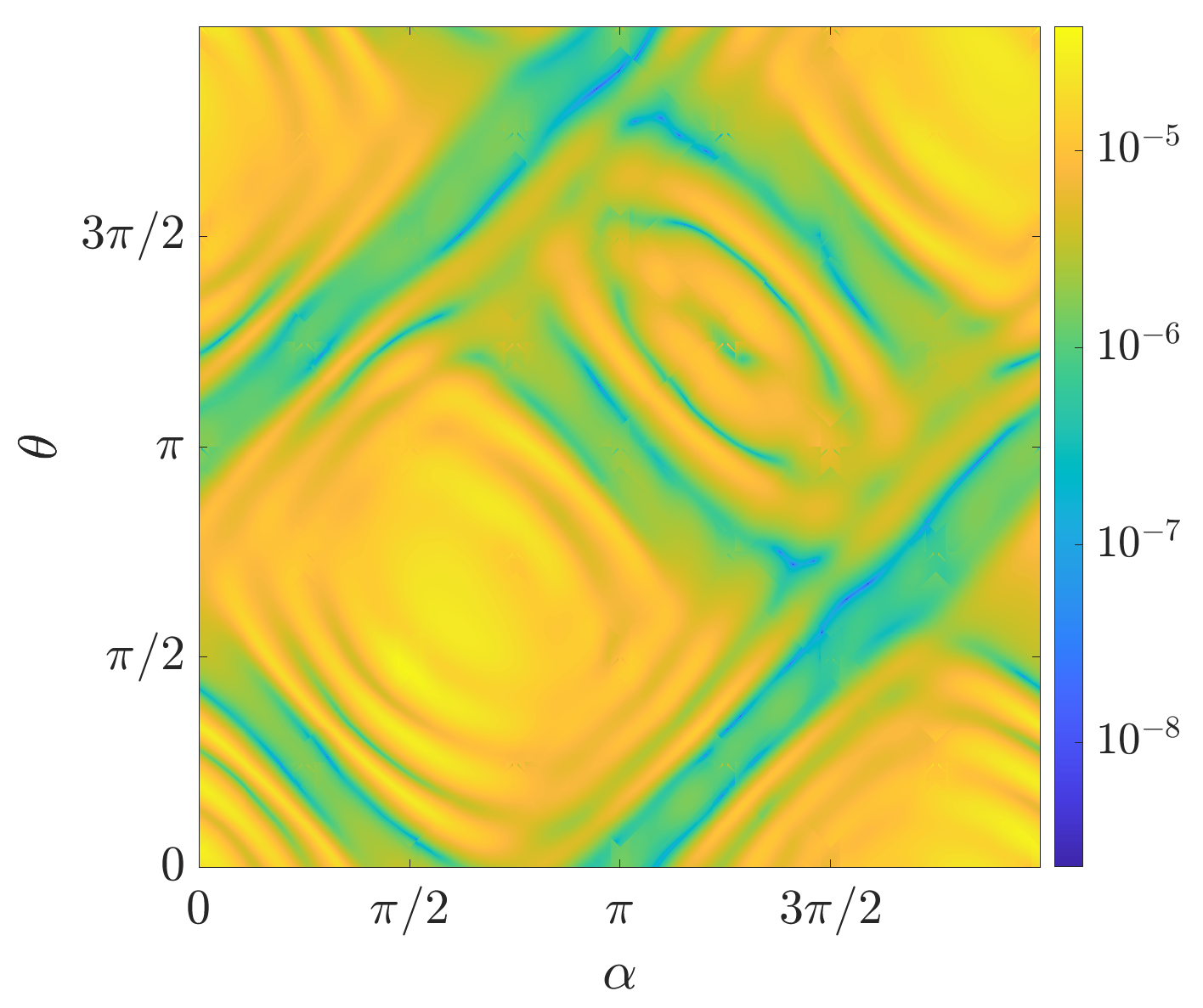}
			\caption{$\log|D(\theta,\alpha)|$ for right-angled {isosceles} triangle $k=10$ {(left)}, and corresponding pointwise relative error {(right)}.}
			\label{fig:raerrsk10p4}
		\end{figure}
	}
	
	\subsubsection*{Screen, $k=100$}
	{For a larger wavenumber $k=100$, we now consider the far-field induced by the screen $\Omega=[0,1]\times\{0\}$ and the incident angle $\alpha=\pi/4$. We use the HNA BEM solver described in \S\ref{sec:hnabem}, with a reference solution of $N_{\mathrm{ref}}=188$. The results are shown in Figure~\ref{fig:screenglowup}. Again, the naive approximation ({\eqref{eq:Dnaive}}, dotted lines) blows up at $\theta\in\Theta_\alpha$, which is fixed by our method. {The region of \revise{blow-up} appears to become narrower at higher frequencies, adding weight to our earlier claim in \S\ref{sec:params} that it may be possible to decrease $H$ as $k$ increases.}
		
		\begin{figure}
			\centering
			\includegraphics[width=0.47\linewidth]{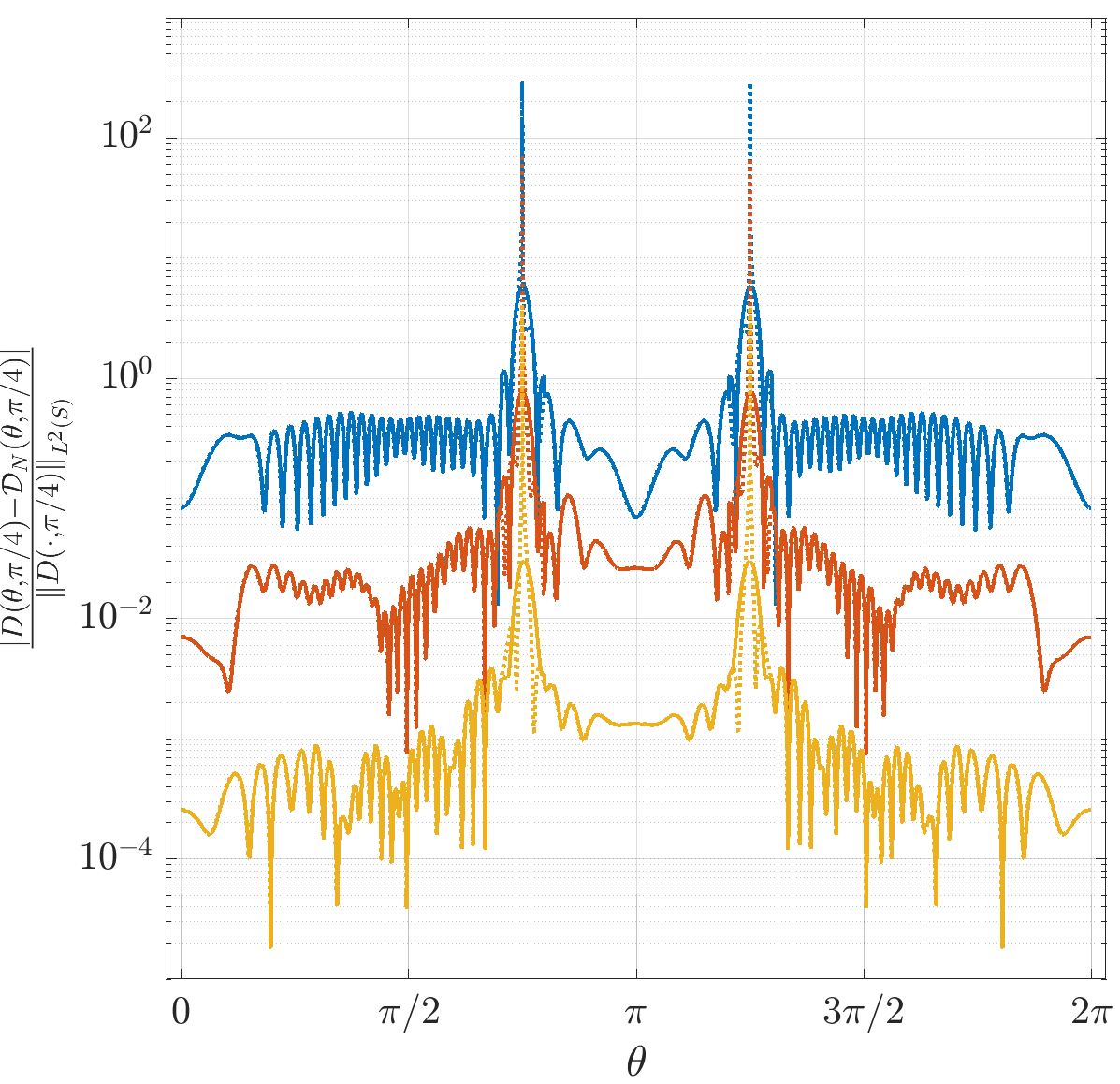}
			\includegraphics[width=0.52\linewidth]{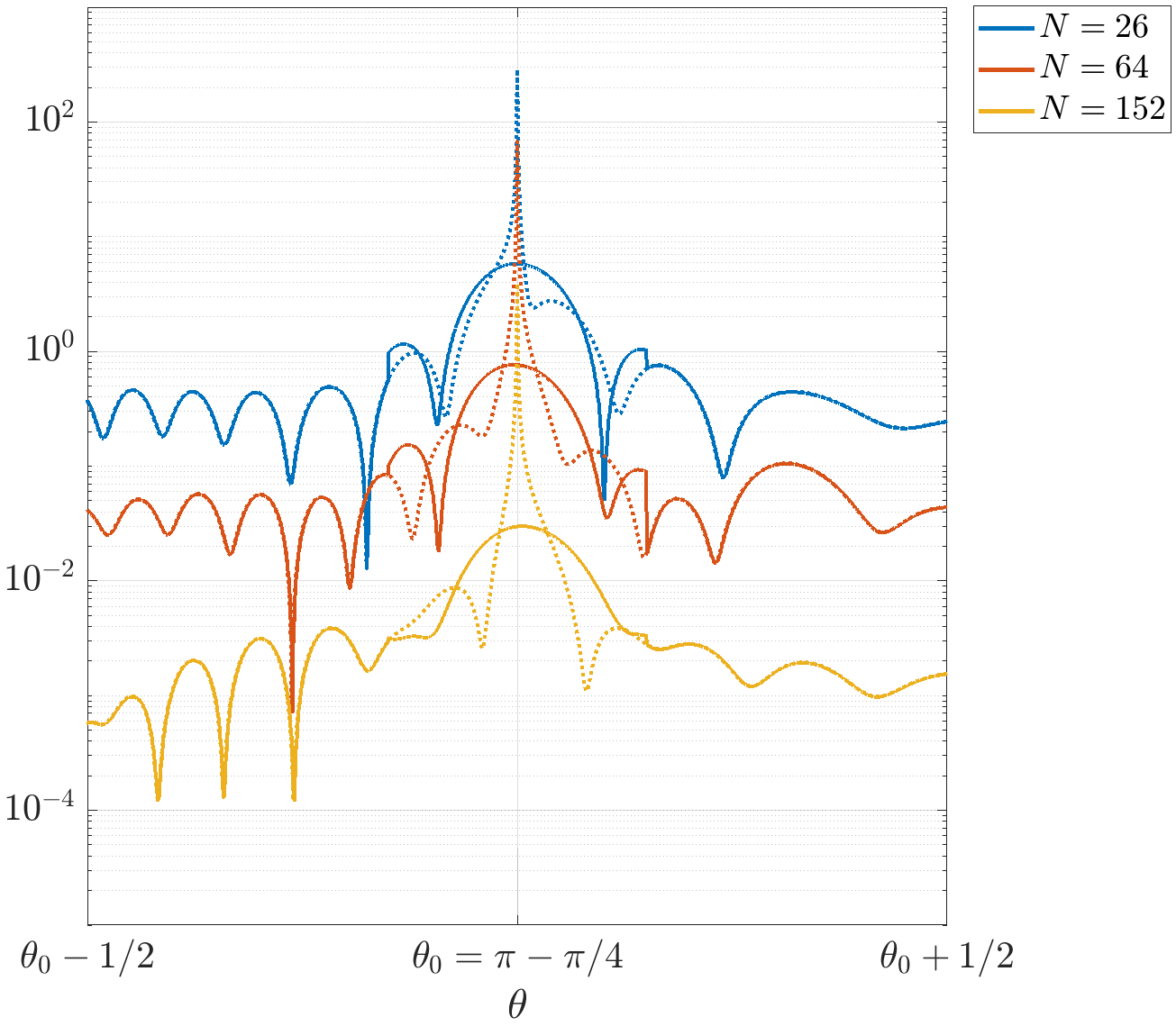}
			\caption{Left: the errors in the naive approximation \eqref{eq:Dnaive} (dashed) and the output of Algorithm \ref{alg:main} (solid), for the screen with $k=100$ and $\alpha=\pi/4$. Right: zoomed in around a point where \eqref{eq:Dnaive} breaks down, $\theta_0=\pi-\alpha$.}
			\label{fig:screenglowup}
		\end{figure}
		
		{As in Figure \ref{fig:raerrsk10p4}, we consider the approximation of $D(\theta,\alpha)$ for $1000$ values of $\alpha$ on the screen, with $N=152$ and $N_{\mathrm{ref}}=188$. The results are shown in Figure \ref{fig:screenerrsk10p4}. When compared against the low-frequency solution in Figure \ref{fig:raerrsk10p4}, we observe that the distribution of energy in the far-field has become more focused at $\theta=\pi\pm\alpha$, corresponding to the angles of reflection and propagation of the incident wave. This focusing is typical at higher frequencies.} %

	\begin{figure}
		\centering
		\includegraphics[width=0.475\linewidth]{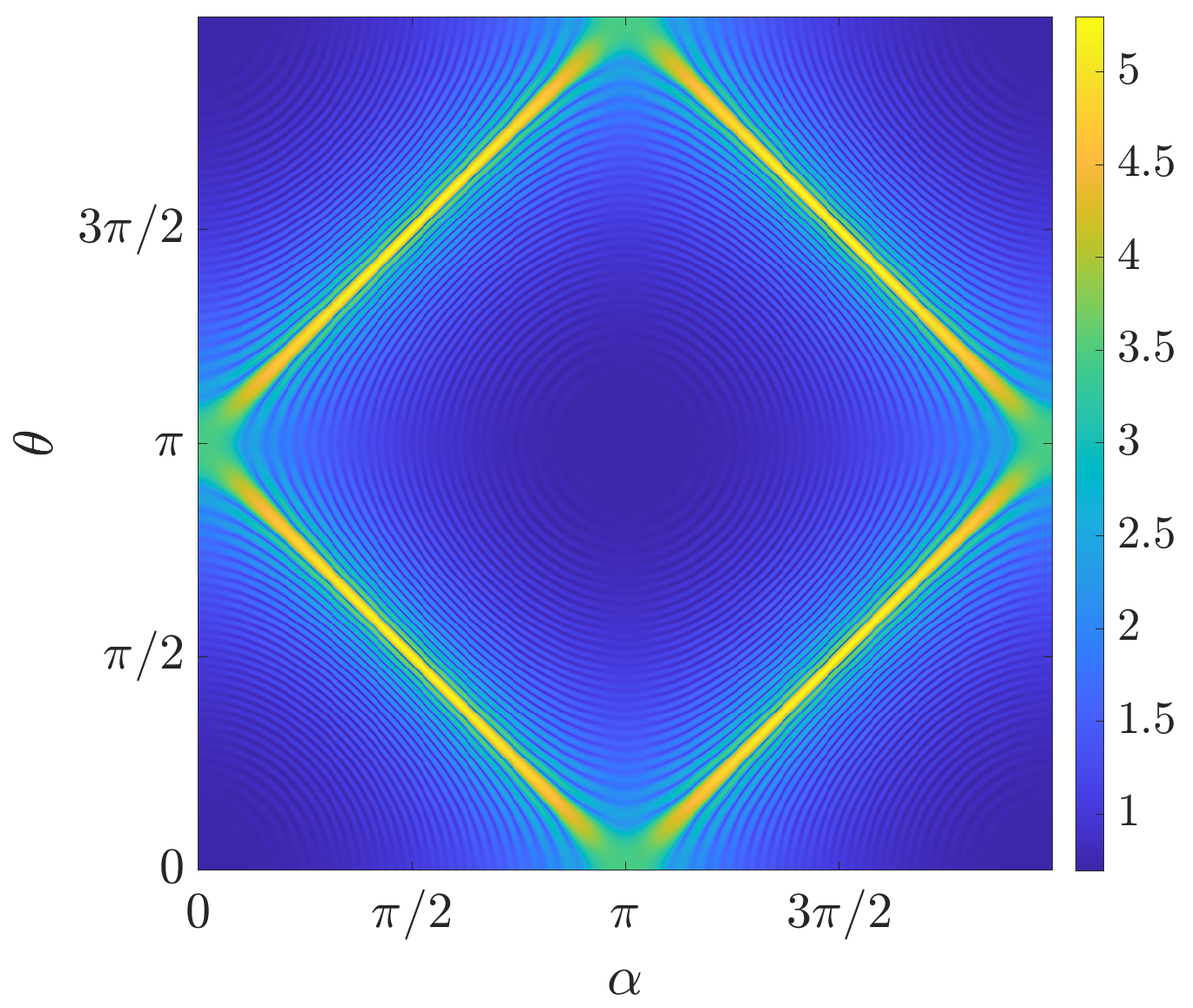}
		\includegraphics[width=0.475\linewidth]{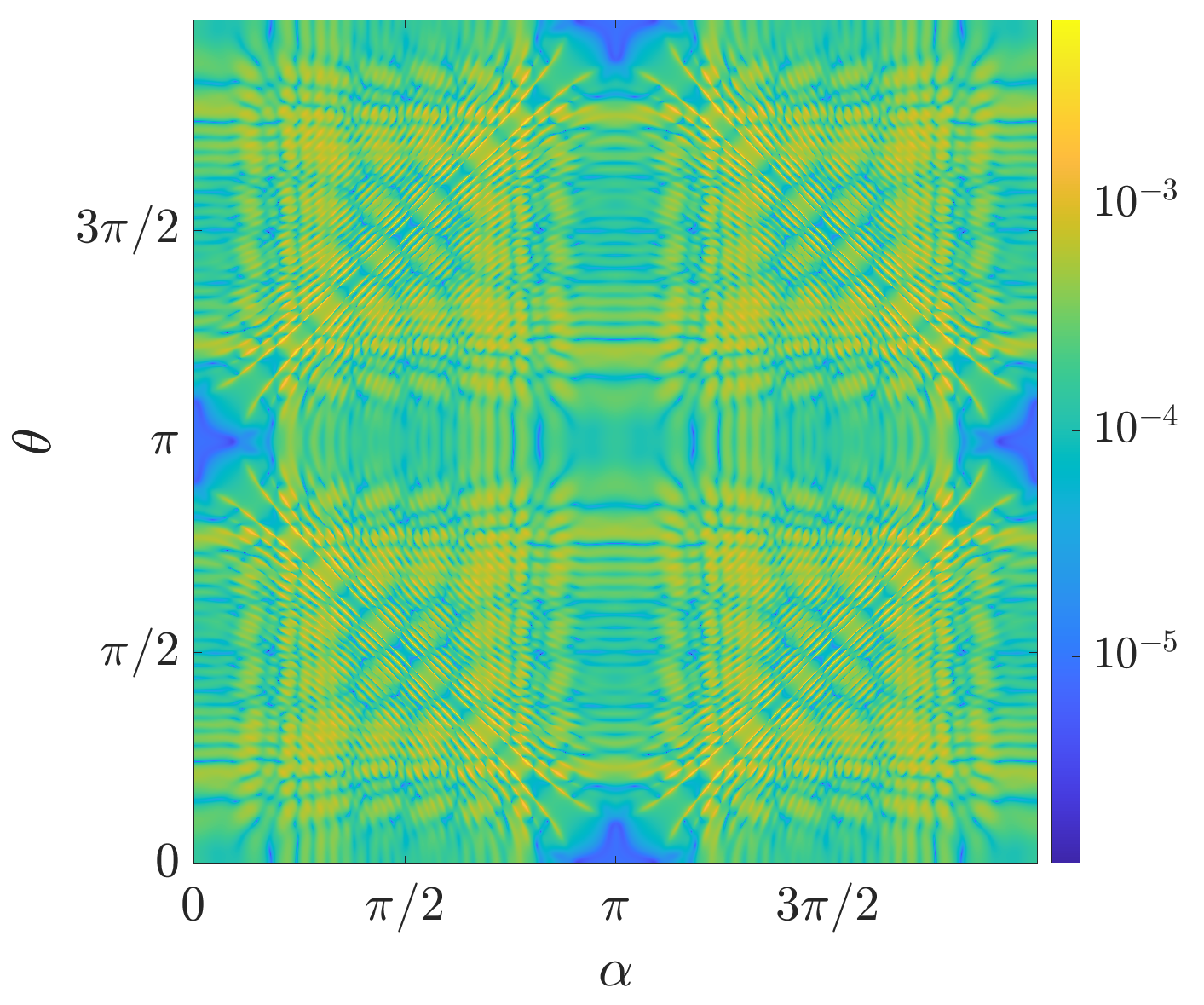}
		\caption{{$\log|D(\theta,\alpha)|$} for screen $k=100$ {(left)}, and corresponding pointwise relative error {(right)}.}
		\label{fig:screenerrsk10p4}
\end{figure}}

\subsection{Testing the strategies for \ptwo}\label{sec:osexps}

{Now we return to the two configurations identified in \S\ref{sec:embintro} for which the system \eqref{eq:sampling} is singular or ill-conditioned. These configurations motivated solutions for \ptwo, and two strategies were proposed in \S\ref{sec:sampling}. Here, we test these strategies for a range of oversampling parameters $\tilde{M}\geq M$ and truncation parameters $\delta$ (the latter only applies to Strategy One).}

\subsubsection*{Screen, $k=1000$}

{First, we consider the screen problem $\Omega=[0,1]\times\{0\}$, with canonical incident angles $\alpha_1=\frac{\pi}{2},\quad\alpha_2=\frac{3\pi}{2}$. Recalling the discussion in \S\ref{sec:embintro}, these incident angles correspond to $A_N$ equal to the $2\times2$ zero matrix. This issue will occur at all wavenumbers, but we consider $k=1000$. When oversampling, we ensure these two problematic incident angles are included, by considering the first $\tilde{M}$ incident angles of
	\[
	\alpha_1=\frac{\pi}{2},\quad\alpha_2=\frac{3\pi}{2},\quad\alpha_3=\pi,\quad\alpha_4=0,\quad\alpha_5=\frac{3\pi}{4},\quad\alpha_6=\frac{5\pi}{4}.
	\]
	Figure \ref{fig:mostestscreen} shows the effect of oversampling over the range $\tilde{M}=M=2$ up to $\tilde{M}=6$. We test Strategy One with truncation parameters $\delta\in\{10^{-12},10^{-8},10^{-4}\}$, and compare against Strategy Two.
	\begin{figure}[h]
		\centering
		\includegraphics[width=0.54\linewidth]{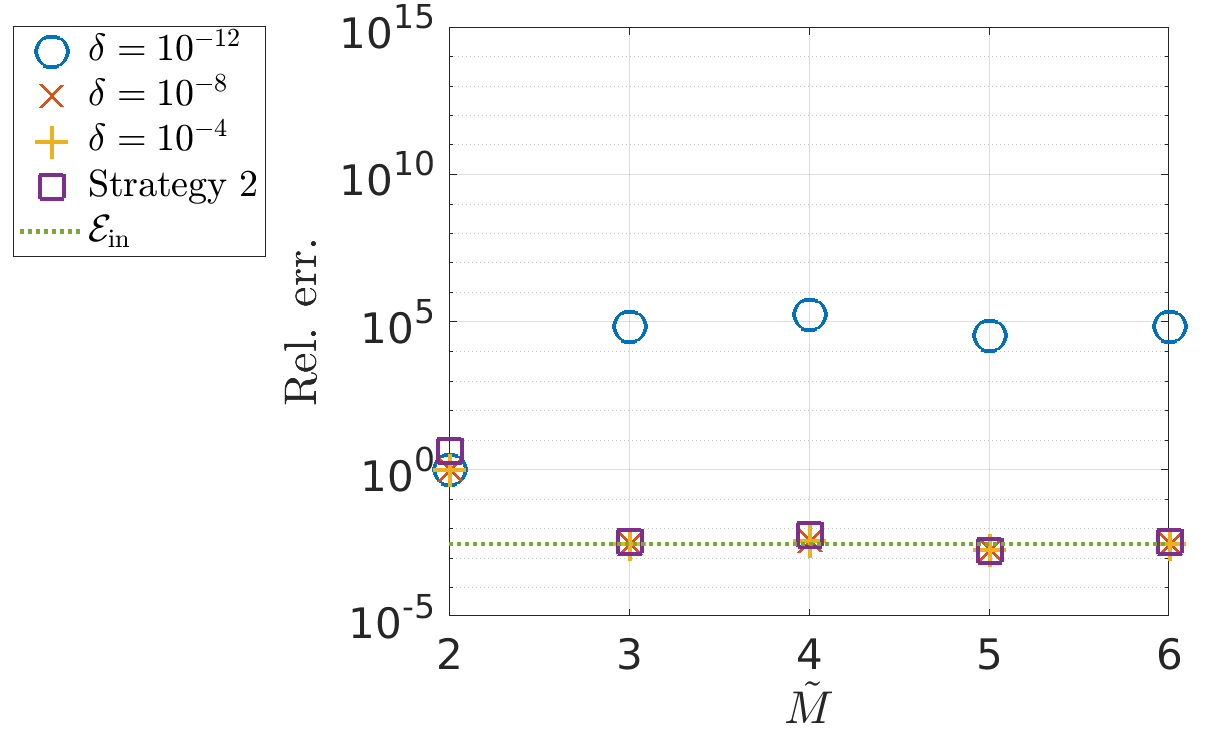}
		\includegraphics[width=0.44\linewidth]{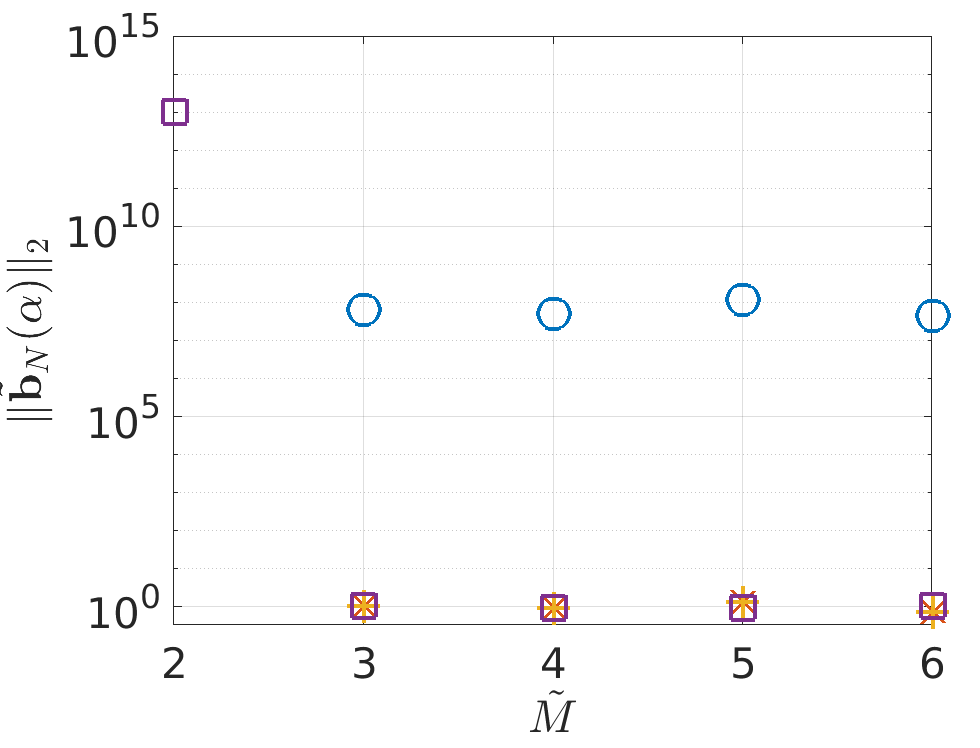}
		\caption{Error {and \revise{coefficient} norm,} varying $\delta$ and $\tilde{M}$ in Algorithm \ref{alg:main}, for the screen with $k=1000$.}
		\label{fig:mostestscreen}
	\end{figure}
	For the solver, we have used HNA BEM (\S\ref{sec:hnabem}), with $N=118$, and for the reference solution $N_{\mathrm{ref}}=188$. The relative error is measured using \eqref{eq:relerr1}
	and the `input error' is measured as
	\begin{equation}\label{eq:inputerr1}
		\errin:={\frac{\max_{m=1,\ldots\tilde{M}}\|D_N(\cdot,\alpha_m)-D_{N_{\mathrm{ref}}}(\cdot,\alpha_m)\|_{L^\infty(\bbS)}}{\max_{m=1,\ldots\tilde{M}}\|D_{N_{\mathrm{ref}}}(\cdot,\alpha_m)\|_{L^\infty(\bbS)}}}.
	\end{equation}
	The norms in \eqref{eq:relerr1} and \eqref{eq:inputerr1} are approximated using $1000$ equispaced samples.
	
	As expected, for $\tilde{M}=M=2$, i.e. without oversampling, the relative error is close to one. It should be noted that when $\tilde{M}=M$, Strategy Two does nothing, because the only valid submatrix is the full matrix. Therefore, the blow-up in the \revise{coefficient} norm and large relative error is what we expect. On the other hand, Strategy One responds by minimising the coefficient norm, which is zero.
	
	For $\tilde{M}\geq3$ Strategy Two performs well, and over the same range with  $\delta\in\{10^{-8},10^{-4}\}$, Strategy One performs well. The experiments show that $\delta=10^{-12}$ is too low for accurate results, which is consistent with the choice \eqref{eq:deltaguess}.
	
	\subsubsection*{Equilateral triangle, $k=10$}
	
	Next we consider the problem where $\Omega$ is the \revise{equilateral triangle with side length $\sqrt{12}/2$ (such that the vertices lie on the unit circle)}, with wavenumber $k=10$. Recalling Figure \ref{fig:badtricond25p8}, we observed that the set of canonical incident angles $\alpha_m = {2(m-1)\pi}/{12},$ for $m=1,\ldots,M=12$,
	caused $\cond(A_N)$ to blow up. We ensure these problematic incident angles are included, and oversample with up to an additional four angles, equispaced between consecutive angles in the above set, like so: $\alpha_m = {2\pi}/{24} + \alpha_{m-12}$, for $m=13,\ldots,16$. Now we use a standard $hp$-BEM, with $N=375$ and $N_{\mathrm{ref}}=591$, and as for the screen, the relative error is measured using \eqref{eq:relerr1} and the input error is measured using \eqref{eq:inputerr1}.
	
	As for the screen problem, choosing $\tilde{M}=M+1$ appears to be sufficient, and for this experiment all values of $\delta$ appear to work well, see Figure \ref{fig:mostesttri}. The coefficient norm is slightly increasing for $\delta=10^{-12},10^{-8}$ and $\tilde{M}=15,16$, but this does not affect the relative error.
	
	In both of these experiments and in others which are not reported, we have observed that Strategy Two performs {at least} as well as Strategy One{, provided we oversample sufficiently}. Considering that Strategy Two has numerous advantages (stated at the end of \S\ref{sec:strategy2}), we recommend this as the default choice.

	\begin{figure}
		\centering
		\includegraphics[width=0.54\linewidth]{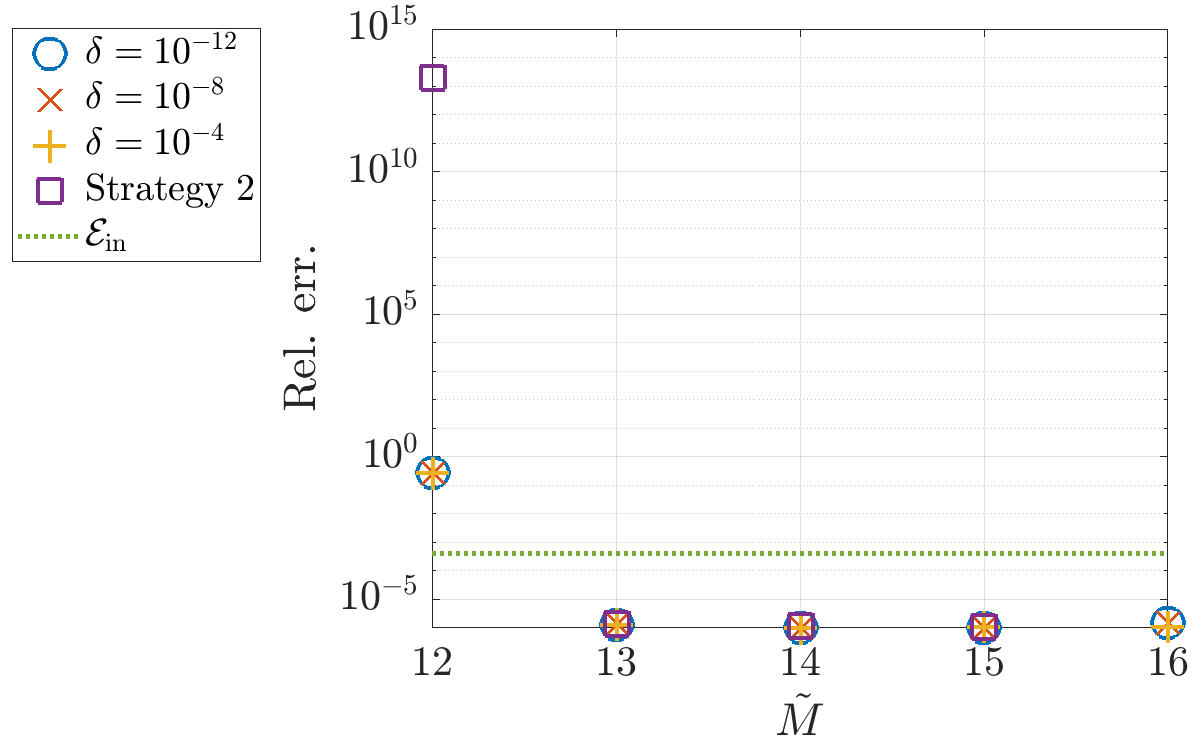}
		\includegraphics[width=0.43\linewidth]{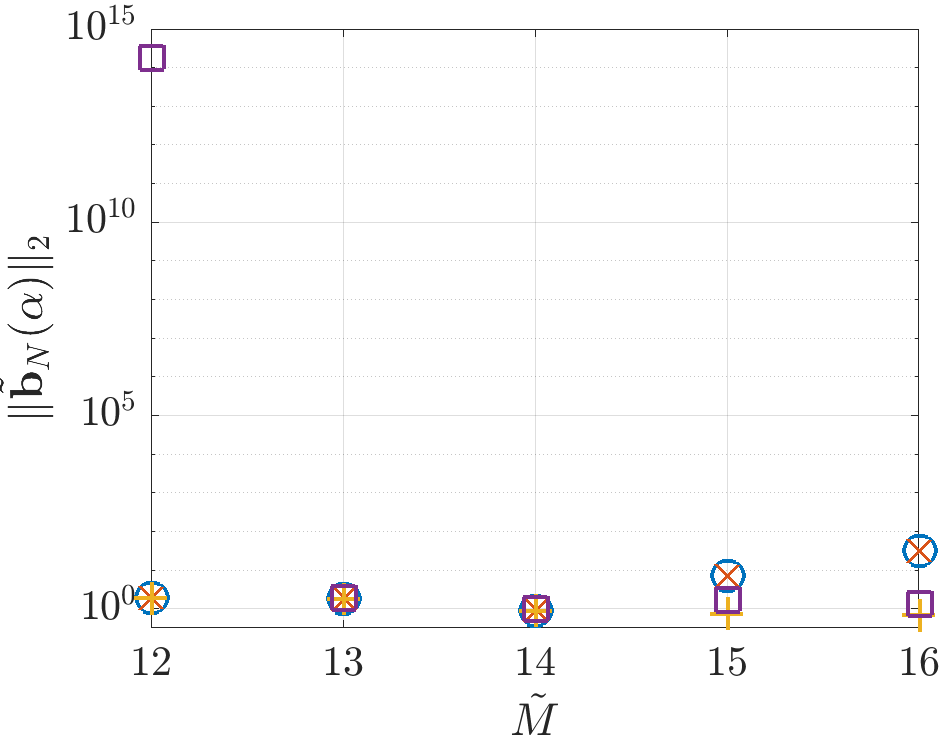}
		\caption{Error {and \revise{coefficient} norm,} varying $\delta$ and $\tilde{M}$ in Algorithm \ref{alg:main}, for the equilateral triangle with {$k=10$}.}
		\label{fig:mostesttri}
	\end{figure}
	
	\subsection{Conditioning estimates}\label{sec:condests}
	
	The constant $C$ in Theorem \ref{thm:semidiscrete_error} explains the relationship between input error (in the canonical far-field approximation) and output error (of our embedding formula \eqref{eq:CauchyFFN}). In this sense, $C$ describes the conditioning of our embedding formula. Now we consider {another quantity to measure conditioning}, measured in terms of the ratio of $\errout$ and $\errin$, where
	\[
	\errout:=\frac{\|\cD_N-D_{N_\mathrm{ref}}\|_{L^\infty(\T)}}{\|D_{N_\mathrm{ref}}\|_{L^\infty(\T)}}.
	\]
	We {approximate the norms using a tensor product} trapezoidal rule with $1000\times1000$ points (again, this converges exponentially by arguments in, e.g., \cite{TreWe:14}). {Here we investigate the relationship between the input and output errors, over a range of wavenumbers, scatterers and solvers.}%
	
	\subsubsection*{Regular polygons, low wavenumbers}
	
	\begin{table}
		\centering
		\begin{tabular}{|c|c|c||S[table-format=1.2e-1]|S[table-format=1.2e-1]|S[table-format=1.2e-1]|S[table-format=1.2e-1]|}
			\hline
			$k$ & $\Omega$ & $N$ & {$\errin$} & {$\errout$} & {$\errout/\errin$} & {$\cond(A_N)$}\\
			\hline
			5 &  triangle & 78 & 4.3e-04 &1.0e-02 &2.4e+01 &4.3e+00\\
			&  triangle & 192 & 3.6e-06 &7.5e-05 &2.1e+01 &4.3e+00\\
			&  triangle & 354 & 9.7e-08 &1.3e-06 &1.3e+01 &4.3e+00\\
			&	 square &	104&	1.0E-04&	3.5E-02&	3.4E+02&	2.1E+00\\
			&	 square &	256&	1.2E-06&	2.8E-04&	2.4E+02&	2.1E+00\\
			&	 square &	472&	4.0E-08&	1.1E-05&	2.8E+02&	2.1E+00\\
			&	 pentagon& 	130&	4.1E-05&	2.3E-03&	5.6E+01&	1.2E+05\\
			&	 pentagon &	320&	5.1E-07&	2.6E-05&	5.0E+01&	1.2E+05\\
			&	 pentagon &	590&	1.9E-08&	2.1E-07&	1.1E+01&	1.2E+05\\
			\hline
			25 & triangle & 168 & 9.8e-04 & 2.7e-02 & 2.8e+01 & 2.8e+01\\
			& triangle & 342 & 4.3e-06 & 4.0e-04 & 9.3e+01 & 1.4e+01\\
			& triangle & 564 & 8.3e-08 &1.1e-05 &1.3e+02 &1.4e+01\\
			&	 square &	200&	6.8E-04&	7.1E-01&	1.1E+03&	1.3E+01\\
			&	 square &	416&	3.4E-06&	3.9E-03&	1.1E+03&	1.3E+01\\
			&	 square &	696&	5.6E-08&	2.5E-05&	4.6E+02&	1.3E+01\\
			&	 pentagon& 	235&	4.8E-04&	1.9E+00&	4.0E+03&	3.7E+02\\
			&	 pentagon &	495&	2.1E-06&	7.6E-03&	3.6E+03&	6.2E+02\\
			&	 pentagon &	835&	4.4E-08&	5.8E-05&	1.3E+03&	6.2E+02\\
			
			\hline
		\end{tabular}
		\caption{{$L^\infty$ input and output errors for a range of regular polygons. \revise{Error values and condition numbers are reported to two significant figures.}}}\label{tab:stdbem2}
	\end{table}
	
	Low-frequency results for \revise{when $\Omega$ is a regular polygons with vertices positioned on the unit circle}, where $D_N$ is the standard BEM solver (\S\ref{sec:stdbem}), are given in {Table \ref{tab:stdbem2}. The same experiment was performed for Strategy One, with similar results.} For the triangle, $N_\mathrm{ref}=942$, for the square, $N_\mathrm{ref}=1400$, and for the pentagon, $N_\mathrm{ref}=1660$. {In all cases, we observe convergence as $N$ increases, and our method can achieve a high accuracy for all incident angles with a relatively low $N$. There appears to be no obvious rule for predicting $\errout/\errin$.
	}

		\subsubsection*{Screen, high wavenumbers}
		\begin{table}
			\centering
			\begin{tabular}{|c|c||S[table-format=1.2e-1]|S[table-format=1.2e-1]|S[table-format=1.2e-1]|c|}
				\hline
				$k$ & $N$ & {$\errin$} & {$\errout$} & {$\errout/\errin$} & {$\cond(A_N)$}\\
				\hline
				500	&	44	&	4.2E-04		&	8.4E-01	&	2.0E+03	&	2.8	\\
				&	90	&	4.3E-05		&	2.5E-01	&	5.7E+03	&	2.8	\\
				&	152	&	1.0E-05	&	6.0E-02	&	6.0E+03	&	2.8	\\
				&	230	&	2.9E-06		&	1.6E-02	&	5.6E+03	&	2.8	\\
				\hline
				1000	&	44	&	2.4E-04	&	1.9E+00	&	8.1E+03	&	2.1	\\
				&	90	&	6.2E-05	&	5.8E-01	&	9.4E+03	&	2.2	\\
				&	152	&	4.8E-06	&	3.2E-02	&	6.7E+03	&	2.1	\\
				&	230	&	2.9E-06	&	5.7E-03	&	2.0E+03	&	2.1	\\
				\hline
				5000	&	44	&	9.6E-05	&	6.5E-01	&	6.7E+03	&	2.6	\\
				&	90	&	1.3E-05	&	4.7E-02	&	3.6E+03	&	2.3	\\
				&	152	&	2.6E-06	&	1.1E-02	&	4.1E+03	&	2.3	\\
				&	230	&	1.3E-06	&	1.9E-03	&	1.5E+03	&	2.3	\\
				\hline
				10000	&	44	&	8.2E-05	&	5.4E-01	&	6.6E+03	&	2.6	\\
				&	90	&	1.2E-05	&	7.4E-02	&	6.1E+03	&	2.9	\\
				&	152	&	1.9E-06	&	7.5E-03	&	3.9E+03	&	3.0	\\
				&	230	&	1.0E-06	&	2.1E-03	&	2.0E+03	&	3.0	\\
				
				\hline
			\end{tabular}
			\caption{{$L^\infty$ input and output errors, for large $k$ when $\Omega$ is a screen. \revise{Error values and condition numbers are reported to two significant figures.}}}\label{tab:hnabem}
		\end{table}

		High-frequency results on the screen, where $D_N$ is the HNA BEM of \S\ref{sec:hnabem}, are given in Table \ref{tab:hnabem}. Here $N_\mathrm{ref}=188$. Again, we observe convergence in each case as $N$ increases. {Convergence was observed to be much slower when the same experiments were run for Strategy One}. For {$N=230$, we observe {$\approx1\%$ error or less} for all wavenumbers tested}. This suggests that $N$ does not need to be very large to accurately represent the far-field pattern for all incident angles at high frequencies. This is a very encouraging result, suggesting that when paired with the HNA BEM, {the error and cost of our method remain} fixed for large $k$.
		
		\revise{
			\section{Conclusions and future work}\label{sec:future}
			
			Embedding formulae describe the fascinating theoretical connection between the far-field patterns induced by different incident plane waves. We have shown that with careful modifications, some of these formulae can be of practical use, significantly reducing the cost in numerical scattering models for two-dimensional sound-soft polygons.
			
			It is natural to ask if the techniques of this paper may be generalised to different scattering configurations. Focusing on two natural extensions, Table \ref{tab:context} places this current work within the context of necessary related results. The table is intended to highlight gaps elsewhere in the current scattering literature, which must be filled before the work of this paper can (possibly) be generalised.

			\begin{table}
				\begin{tabular}{|p{3cm}||p{3.25cm}|p{3.25cm}|p{3.25cm}|}
					\hline 
					Step & Sound-soft polygons 	& Sound-hard polygons  & Sound-soft polyhedra \\
					\hline\hline
					1. Edge Green Embedding formulae & Craster and Shanin \cite{KrSh:05}.	& Also in Craster and Shanin \cite{KrSh:05}.  & Some cases in \cite{SkCrShVa:10}, for example cubes. \\
					\hline
					2. Far-field Embedding formulae & Biggs \cite{Bi:06}. &  In \cite{Bi:06, Bi:16}, Biggs states that sound-hard formulae can be derived with minor modifications to sound-soft problem, but no results have yet been published. &  \\
					\hline
					3. Numerically robust modification & \textbf{This paper.} &  &  \\
					\hline
					4. Corresponding high-frequency solver & Screens: \cite{GiHeHuPa:20, git:HNABEMLAB}. Convex polygons: requires frequency-independent implementation of \cite{ChLa:07,HeLaMe:13} (work in progress).& Requires frequency-independent implementation of \cite{ChLaMo:12} (work in progress).&  Initial ideas were discussed in \cite[\S7.6]{ChGrLaSp:12}. Initial experiments on square screens were presented in \cite{hargreaves2015high}. No frequency-independent solver is available.\\
					\hline
				\end{tabular}\label{tab:context}
				\caption{Overview of existing literature on embedding formulae and high-frequency solvers for exterior scattering problems.}
			\end{table}
			
			It is clear from Table \ref{tab:context} that the main gap in the current literature is \emph{Step two} - embedding formulae \emph{in terms of far-field patterns.} In principle one could skip Step 2, applying the ideas of this paper to the embedding formulae of \cite{KrSh:05}, which also hold for sound-hard problems and contain $1/\Lambda(\theta,\alpha)$-type removable singularities; this is a possible area for future work. A similar approach may be possible for the three dimensional structures in \cite{SkCrShVa:10}. However, a key practical advantage of Step 2 is that (to the best knowledge of the authors) there are many existing solvers for computing far-field patterns, and far fewer for computing edge Green's functions.%
			
			We remark that Step 4 is not essential for embedding formulae to be of practical use; any problem which requires the far-field pattern induced for a large number of incident waves may enjoy a reduced computational cost using a numerically robust embedding formula, if one exists. We expect to have developed a frequency-independent solver for convex polygons in the near future, and we are excited to combine it with Algorithm \ref{alg:main}, and investigate the performance at high frequencies.
		}
		
		\section{Acknowledgements}
		
		The authors thank Nicholas Biggs, Abi Gopal, Stuart Hawkins, {Dave Hewett,} Daan Huybrechs, {Andrea Moiola}, Jennifer Scott and Marcus Webb for helpful conversations. \revise{We greatly appreciate the valuable comments and suggestions from Nick Trefethen and the anonymous referees.} AG is grateful for support from EPSRC grants EP/S01375X/1 and EP/V053868/1.
		
		\bibliographystyle{siam}
		\bibliography{refs}
	\end{document}